\newsavebox{\@brx}
\newcommand{\llangle}[1][]{\savebox{\@brx}{\(\m@th{#1\langle}\)}%
  \mathopen{\copy\@brx\kern-0.5\wd\@brx\usebox{\@brx}}}
\newcommand{\rrangle}[1][]{\savebox{\@brx}{\(\m@th{#1\rangle}\)}%
  \mathclose{\copy\@brx\kern-0.5\wd\@brx\usebox{\@brx}}}
\newtheorem{thm}{Theorem}
\newtheorem{rem}{Remark}
\newcommand{\q}{Q}
\newcommand{\bx}{\ensuremath{x}}
\newcommand{\okappa}{\ensuremath{\bar{\kappa}}}
\newcommand{\bn}{\mathbf{n}}
\def\jump#1{\left \llbracket #1 \right \rrbracket}
\title{\emph{A Posteriori} Analysis and Efficient Refinement Strategies for the Poisson-Boltzmann Equation}
\author{Jehanzeb H.~Chaudhry  \thanks{Department of Mathematics and Statistics, The University of New Mexico, Albuquerque, NM 87131. Email: jehanzeb@unm.edu}}
\begin{document}


\maketitle

\begin{abstract}
The Poisson-Boltzmann  equation (PBE) models the electrostatic interactions of charged bodies such as molecules and proteins in an electrolyte solvent. The PBE is a challenging equation to solve numerically due to the presence of singularities, discontinuous coefficients and boundary conditions. Hence, there is often large error in the numerical solution of the PBE that needs to be quantified. In this work, we use adjoint based \emph{a posteriori} analysis to accurately quantify the error in an important quantity of interest, the solvation free energy, for the finite element solution of the PBE. We identify various sources of error and propose novel refinement strategies based on \emph{a posteriori} error estimates.
\end{abstract}



\section{Introduction}
\label{sec:intro}

Electrostatic interactions play a critical role in determining
macroscopic properties of dielectric biomolecular systems, such as solvation free energy
and binding affinities~\cite{DaMc90,Ko2006,ViMa2005}. The
Poisson-Boltzmann equation (PBE) has been widely used for modeling the electrostatic interactions of charged bodies such as molecules and proteins in electrolyte solvents.  The PBE was introduced decades ago~\cite{Go1910,Ch1913}, and we refer to the classical texts \cite{McQuarrie76,Tanford61} for its derivation.

The focus of this article is robust error estimation and refinement strategies for computing a quantity of interest (QoI), such as the solvation free energy, from the solution of the PBE. The PBE is a challenging equation to solve numerically and numerous computational methods and software packages have been derived for its solution~\cite{
GSH87,
Ra90, 
YoLe90,
VGS92, 
Zho93, 
BWSH95, 
DaMc89, 
HoSa95, 
LDM92,
Bashford97,
CHX07,
Ortt77, 
CoFr97, 
HBW00, 
BHW00, 
SMN02,
WYQ05,
LZMJ05,
BCCO09,
HMY+2012}. In this article we follow the approach in \cite{GYW2007,HMY+2012} to solve the PBE using a three term splitting method which accounts for the well-posedness of the continuum problem as well as avoiding amplification of numerical rounding errors. However, even this method, like all numerical methods, often has significant errors in the computation of the QoI and this error needs to be accurately estimated from computed information for reliable use of the PBE in biophysics, biochemistry, medical and other science and engineering fields~\cite{Estep:Larson:00,Fish:2009}.

In this article we employ adjoint based \emph{a posteriori} analysis to accurately
quantify the error in a QoI computed from the numerical solution of the PBE.  Adjoint based error estimation is
widely used for a host of numerical methods including finite elements, finite difference, 
time integration, multi-scale simulations and inverse problems~\cite{estep_sinum_95,eehj_book_96,Estep:Larson:00,
  AO2000,BangerthRannacher,barth04,beckerrannacher,
  Giles:Suli:02, YCLP2004, CEG+2015, jdt13,  VRAS2015}. The error estimate weights
computable residuals of the numerical solution with the
solution of an adjoint problem to quantify the accumulation and propagation of
error.  The resulting estimates have the useful feature that the total error is decomposed as a sum of contributions from various aspects of the discretization and therefore provide insight in to the effect of different choices for the parameters controlling the discretization. Thus, we not only quantify the error using adjoint based a posteriori analysis, we also partition the error to identify contributions from various sources of error. For example, we can identify if the boundary discretization or the interior discretization is the major source of error.


Since the error in the numerical solution of the PBE is often significant, there are a number of adaptive refinement strategies proposed for obtaining accurate solutions of the PBE~\cite{BCCO09,HBW00,HMY+2012,VoSc97,ABC+2011}. Most of the adaptive algorithms are based on controlling the error in global norms and some of the algorithms are shown to be provably convergent~\cite{CHX07}. However, if the goal of the numerical computation is accurate approximation of the QoI, then a refinement strategy based on solution residuals weighted by the adjoint information is an appealing option. In this paper we propose refinement strategies based on the relative contribution to the error of a discretization choice.  The adjoint based analysis and its partitioning of the error suggests novel refinement strategies for obtaining accurate estimates of the QoI from the numerical solution of the PBE.

Adaptive refinement using adjoint based analysis and optimal multilevel preconditioning  for the PBE are developed previously in \cite{ABC+2011}.
However, the analysis and results of this article differ significantly from that paper. The focus of that paper was adaptive refinement for the linearized PBE using the two term splitting~\cite{CHX07}, whereas we focus on the three term splitting for both the linearized and the nonlinear PBE~\cite{GYW2007,HMY+2012}. Moreover, our aim is to derive accurate error estimates in the QoI. While \cite{ABC+2011} derived an error estimate for the QoI for the three term splitting, no numerical experiments were performed for the three term splitting. Even for the two term splitting no numerical results indicating the accuracy of the derived estimates were shown and instead the focus was on adaptive refinement. In addition, the adjoint problem for the three term splitting derived in \cite{ABC+2011} leads to an ill-posed problem as we discuss in \S \ref{sec:apost_analysis}.  In this work, we  not only derive  an estimate for the three term splitting based on the correct formulation of the adjoint operator, we also decompose the error so that the various sources of error and their relative contributions are also available. 
Moreover, the error estimate derived in \cite{ABC+2011} assumes that the continuum and discrete solutions satisfy the boundary conditions exactly. While this assumption may be justifiable for the results in the two term splitting in \cite{ABC+2011}, we point out the importance of the role of boundary condition for the harmonic component of the three term splitting. This boundary condition is defined on the interface between the solvent and molecular regions, and hence impacts the computation of the QoI significantly. Finally, we propose a fundamentally different refinement strategy since the standard goal oriented refinement strategy employed in \cite{ABC+2011} appears to be sub-optimal. Adaptive refinement for obtaining accurate values of a QoI is a challenging task as the error contributions of an individual element may be positive or negative leading to significant cancellation of error. In \cite{ABC+2011}, the refinement strategy takes the absolute value of error contributions and applies the principle of equidistribution for marking elements for refinement. This strategy ignores the cancellation of error, and hence the resulting adaptive algorithm may have less than desirable convergence properties. This drawback is overcome in \cite{ABC+2011} by defining a somewhat ad-hoc error indicator.  On the other hand, this article decomposes the error into different contributions and use this information to devise adaptive schemes to target the discretization choices which have the most significant effect on error.

The rest of the paper is organized as follows. Section \ref{sec:pbe} introduces the PBE, its linearized and nonlinear versions, weak forms and a finite element method to solve it. Section \ref{sec:apost_analysis}  performs adjoint based \emph{a posteriori} analyses for both the linearized and nonlinear PBE. In particular, a well-posed adjoint problem for the three-term PBE  and error representations are derived. Section \ref{sec:refine_strategies_apost_ideas} discusses refinement strategies based on \emph{a posteriori} error estimates. Numerical experiments are presented in Section \ref{sec:num_exps}, which illustrate the accuracy of the estimates as well as  the efficacy of employing refinement strategies which target specific sources of error.  Section \ref{sec:conclusions} presents conclusions.

\section{The Poisson-Boltzmann Equation}
\label{sec:pbe}

\subsection{The nonlinear Equation and its dimensionless form}
The Poisson-Boltzmann equation models the electrostatic activity between
molecules in an ionic solvent.  In this model, it is assumed that the ions in the
solvent are distributed according to the Boltzmann distribution and that the
potential of the mean force on a particle is simply the charge of the ion times
the electrostatic potential.  For a 1:1 electrolyte solvent (e.g. NaCl), the nonlinear  Poisson-Boltzmann 
equation is~\cite{FBM02,BBC06},
\begin{equation}
	\begin{cases}
\begin{aligned}
-\nabla \cdot (\epsilon(\bx) \nabla \tilde{u}(\bx)) + \okappa^2(\bx) \left(
\frac{k_BT}{e_c} \right) \sinh\left( \frac{e_c \tilde{u}(\bx)}{k_B T}\right)  = 4
\pi \sum_{i=1}^{m} \q_i \delta(\bx - \bx_i), \label{eq:PBE_original} \\
\lim_{\|\bx\|\rightarrow \infty} \tilde{u}(\bx)  = 0.
\end{aligned}
\end{cases}
\end{equation}
%
Here, $\tilde{u}$ is the unknown electrostatic potential, $\epsilon$ is the 
dielectric coefficient, $\okappa(\bx)$ is the modified Debye-H\"uckel
parameter which  describes the accessibility of the solvent
to the solute,  $k_B$ is the Boltzmann constant, $e_c$ is the charge on a proton  and $T$ is the 
temperature.  
Moreover, the solute contains a total of $m$ fixed point 
charges, with the $i$th charge $\q_i$ centered at position $x_i$. The
resulting distribution is a linear combination of Dirac delta 
functions $\delta( \bx - x_i )$.

The domain for the problem $\mathbb{R}^3$, is subdivided into a molecular 
region, $\Omega_m$, a solvent region $\Omega_s^\infty$, and an interface between 
the two denoted by $\Gamma$. The solute is surrounded by solvent, which 
is represented as a continuum over the subdomain 
$\Omega_s^\infty = \mathbb{R}^3 \backslash \overline{\Omega}_m$.
The subdomains for a typical biomolecular solute are shown in 
Figure~\ref{fig:pbe_domains_pic} which has been adopted from \cite{BCCO09}.
\begin{figure}[!ht]
\centering
\includegraphics[width=.6\textwidth]{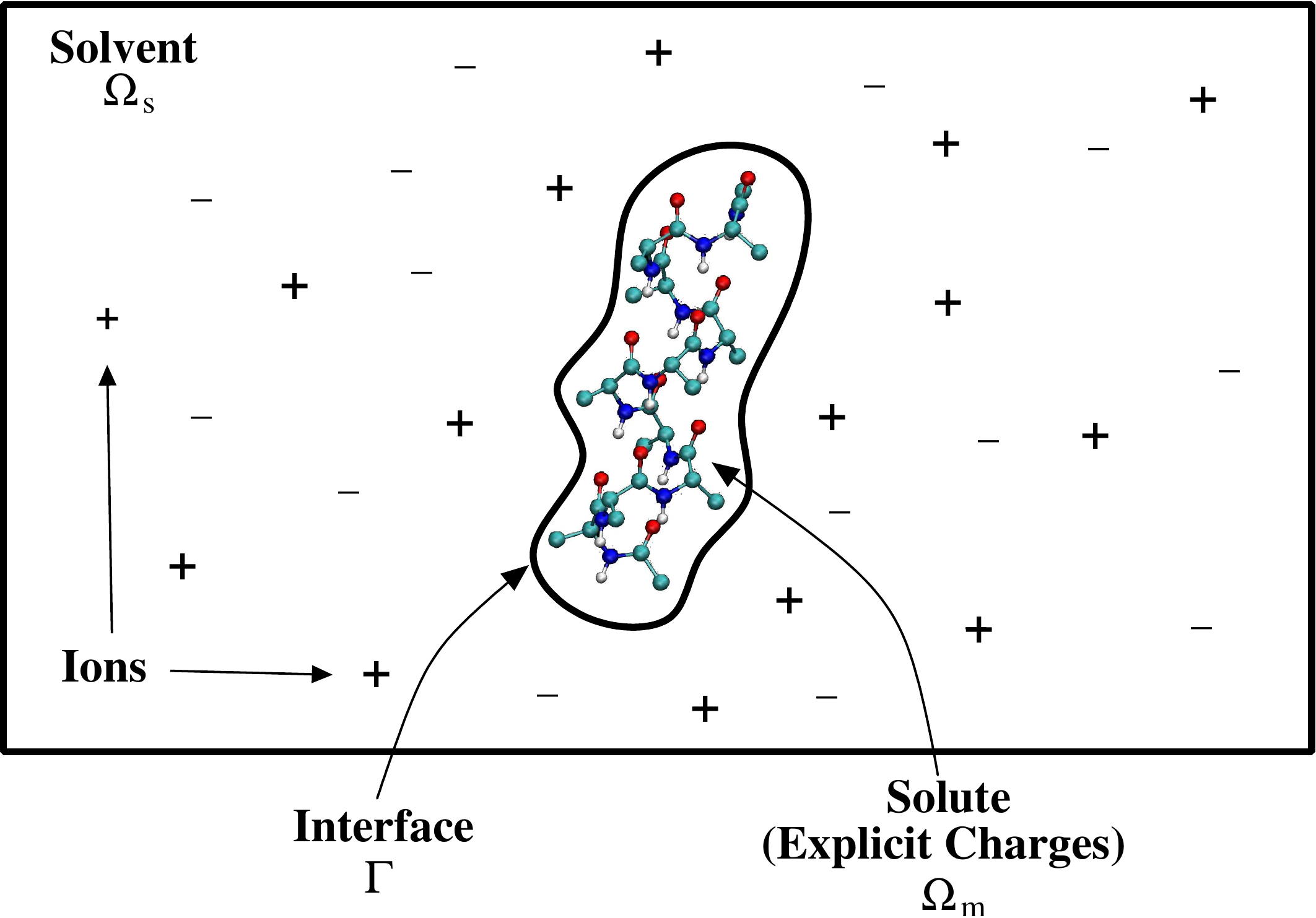}
\caption{Subdomains for the Poisson-Boltzmann equation}
\label{fig:pbe_domains_pic}
\end{figure}
The dielectric coefficient $\epsilon(\bx)$ and modified Debye-H\"uckel
parameter $\okappa(\bx)$ are defined on $\Omega_m \cup \Omega_s^\infty$ by the
piecewise constant functions
\begin{equation}
\epsilon(\bx) =
\begin{cases}
\epsilon_m & \quad \bx \in \Omega_m\\
\epsilon_s & \quad \bx \in \Omega_s^\infty
\end{cases}
\quad
\textnormal{and}
\quad
\okappa^2(\bx) =
\begin{cases}
0 & \bx \in \Omega_m\\
\okappa^2_s = \epsilon_s \frac{8 \pi N_A e_c^2}{1000 k_B T}I_s &
\bx \in \Omega_s^\infty
\end{cases}.
\end{equation}
Here, $\epsilon_m$ and $\epsilon_s$ are positive constants and 
$N_A$ is Avogadro's number. The ionic
strength $I_S$ is a physical parameter which varies depending on
the solvent. 

Numerical simulations are not feasible on the unbounded solvent domain, $\Omega_s^\infty$, and hence it
is  truncated at a finite radius from the ``center'' of the molecule, to form a bounded domain $\Omega_s$. Dirichlet boundary 
conditions are 
imposed to capture the asymptotic behavior of the solution on an unbounded 
domain. Combining this with the change of variables, 
$u(\bx) = e_c \tilde{u}(\bx)/k_B T$, results in a dimensionless 
Poisson-Boltzmann equation on the spherical domain 
$\Omega = \Omega_m \cup \Omega_s \cup \Gamma$:
\begin{equation}
\label{eq:PBE_dimless} 
\begin{cases}
\begin{aligned}
-\nabla \cdot (\epsilon(\bx) \nabla u(\bx))
+ \okappa^2(\bx) \sinh({u(\bx)} )
= \frac{4 \pi e_c}{k_B T} \sum_{i=1}^{m} \q_i \delta(\bx - \bx_i), 
\quad x& \in \Omega,  \\
u(\bx)  = g(\bx), 	\, \, \, x& \in \partial \Omega.
\end{aligned}
\end{cases}
\end{equation}

The boundary conditions are prescribed using a linear combination of Helmholtz 
Green's functions~\cite{BCCO09},
\begin{equation}
g = \frac{e_c}{k_B T} \sum_{i=1}^{m} \frac{\q_i}{\epsilon_s \vert \bx - \bx_i \vert} \exp\left(\frac{-\okappa_s \vert \bx - \bx_i \vert}{\sqrt{\epsilon_s}}\right).
\end{equation}

\subsection{Weak form based on three term splitting}
We denote by $L_2(\Omega)$ as the space of square integrable functions,
 $H^1(\Omega)$ as the space of functions having an integrable (weak) derivative,  $H^1_0(\Omega)$ as the subspace of $H^1(\Omega)$ of functions satisfying homogeneous Dirichlet boundary conditions (in the sense of the trace operator) and $H^{-1}$ as the dual space of $H^1_0(\Omega)$.
The right hand side of \eqref{eq:PBE_dimless} contains $\delta$ functions, which are unbounded linear functionals over the space $H^{1}_0$ and hence a well-posed weak form cannot be derived directly from \eqref{eq:PBE_dimless}. To overcome this problem, two and three term splittings of the PBE have been proposed~\cite{ZPVK96,CHX07,HMY+2012}. The two and three term splitting are equivalent mathematically, however, the three term splitting is numerically more desirable~\cite{HMY+2012}. The three term splitting decomposes the function $u$ as
\begin{equation}
\label{eq:pbe_3_term_split}
u = \begin{cases}
u^s + u^h + u^r \quad &\text{ in } \quad \Omega_m,\\
u^r \quad &\text{ in } \quad \Omega_s,
\end{cases}
\end{equation}
where $u^s$, $u^h$ and $u^r$ are the singular, harmonic and regular components respectively. The singular function $u^s$ is the solution of the following Poisson equation
\begin{equation}
\begin{cases}
\begin{aligned}
- \nabla  \cdot \epsilon_m \nabla u^s = \frac{4 \pi e_c}{k_B T} \sum_{i=1}^{m} \q_i \delta(\bx - \bx_i),\\
u^s(\infty) = 0.
\end{aligned}
\end{cases}
\end{equation}
Recognizing that the singular component is the Green's function of the Laplace operator leads to an analytical expression for $u^s$ as
\begin{equation}
u^s(\bx) = \frac{e_c}{\epsilon_m k_B T} \sum_{i=1}^{m} \frac{\q_i}{|\bx - \bx_i|}.
\end{equation}
The harmonic component $u^h$ is the solution to
\begin{equation}
	\label{eq:harmonic_comp}
	\begin{cases}
	\begin{aligned}
	\nabla^2 u^h = 0 \quad &\text{ in } \Omega_m,\\
	u^h = -u^s  &\text{ on } \Gamma.
	\end{aligned}
	\end{cases}
\end{equation}
The regular component $u^r$ satisfies
\begin{equation}
\label{eq:pbe_regular_strong_nonlin}
\begin{cases}
\begin{aligned}
-\nabla \cdot  (\epsilon (\bx) \nabla u^r(x)) + \okappa^2(\bx) \sinh(u^r(\bx)) = 0 \quad \text{ in } \Omega_m \cup \Omega_s,\\
\jump{u^r}_\Gamma = 0,\\
\jump{\epsilon(\bx)\frac{\partial {u^r}(x)}{\partial \bn}}_\Gamma = -\epsilon_m \frac{\partial u^s + u^h}{\partial \bn} \\
u = g \quad \text{ on } \partial \Omega,
\end{aligned}
\end{cases}
\end{equation}
where the jump at the interface is defined as
\begin{equation}
\jump{v(x)}_\Gamma =
\lim_{\alpha
\rightarrow 0^+}  v(x + \alpha \bn)- v(x - \alpha \bn),
\nonumber
\end{equation}
with $\bn$ as the unit normal to the interface $\Gamma$, pointing outward from $\Omega_m$. The condition involving the jump in the normal derivative of $u^r$ arises by substituting \eqref{eq:pbe_3_term_split} in \eqref{eq:PBE_dimless}, using the definitions of $u^s$  and $u^h$, and the fact that for the solution $u$ of \eqref{eq:PBE_dimless} we have $\jump{\epsilon(\bx)\frac{\partial {u}(x)}{\partial \bn}} = 0$. Sometimes the nonlinear PBE is linearized by the assumption $\sinh(u) \approx u$ leading to the dimensionless linearized PBE. We can write both the linear and nonlinear versions as
\begin{equation}
\label{eq:pbe_regular_strong}
\begin{cases}
\begin{aligned}
-\nabla \cdot  (\epsilon (\bx) \nabla u^r(x)) + \okappa^2(\bx) N(u^r(\bx)) = 0 \quad \text{ in } \Omega_m \cup \Omega_s,\\
\jump{u^r}_\Gamma = 0,\\
\jump{\epsilon(\bx)\frac{\partial {u^r}(x)}{\partial \bn}}_\Gamma = -\epsilon_m \frac{\partial u^s + u^h}{\partial \bn}, \\
u = g \quad \text{ on } \partial \Omega.
\end{aligned}
\end{cases}
\end{equation}
where
\begin{equation}
	N(u^r(\bx)) = 
	\begin{cases}
	\begin{aligned}
	&\sinh (u^r(\bx)) \qquad &&\text{for nonlinear PBE},\\
	&u^r (\bx) \quad &&\text{for linearized PBE}.
	\end{aligned}
	\end{cases}
\end{equation}

\subsection{Weak forms}
We define the affine spaces 
\begin{equation}
H^1_{u^s}(\Omega_m) := \lbrace v \in H^1(\Omega_m) : v(x) = -u^s \text{ on } \Gamma\rbrace
\end{equation}
and
\begin{equation}
	H^1_g (\Omega) := \lbrace v \in H^1(\Omega): v = g \text{ on } \Gamma, \alpha \leq v \leq \beta \text{ in } \Omega \rbrace.
\end{equation}
Here $\alpha$ and $\beta$ are positive constants used to control the nonlinear $\sinh$ term, see \cite{HMY+2012} for details. The weak form for the three term split PBE, \eqref{eq:harmonic_comp} and \eqref{eq:pbe_regular_strong}, is to find 
$(u^h,u^r) \in H^1_{u^s}(\Omega_m) \times H^1_g(\Omega)$ such that
\begin{equation}
	\label{eq:pbe_3_term_weak_form_1}
	\begin{cases}
	\begin{aligned}
	&(\epsilon \nabla u^h, \nabla w)_m = 0\\
	&(\epsilon \nabla u^r, \nabla v) + (\okappa^2 N(u^r), v) + \langle \epsilon_m 
	\frac{\partial u^h}{\partial \bn} , v \rangle_\Gamma = - \langle \epsilon_m 
	\frac{\partial u^s}{\partial \bn} , v \rangle_\Gamma
	\end{aligned}
	\end{cases}
\end{equation}
for all $(w,v) \in H^1_0(\Omega_m) \times H^1_0(\Omega)$. Here we used the notation $(a, b) = \int_\Omega a b \, \mathrm{d}x$, $(a, b)_m = \int_{\Omega_m} a b \, \mathrm{d}x$ and $\langle a,b \rangle_\Gamma = \int_\Gamma a b \, \mathrm{d}x$ to represent the standard $L_2$ inner products over $\Omega$, $\Omega_m$ and $\Gamma$ respectively. The existence and uniqueness of the weak solution is shown in \cite{HMY+2012}. The weak form \eqref{eq:pbe_3_term_weak_form_1} is a one-way coupled system; we  first solve for $u^h$ and then use it to compute $u^r$. Now using the Green's identity 
\begin{equation}
\langle \epsilon_m  \frac{\partial u^h}{\partial \bn} , v \rangle = (\epsilon \nabla^2 u^h, v)_m + (\epsilon \nabla u^h, \nabla v)_m
\end{equation}
and \eqref{eq:harmonic_comp} in \eqref{eq:pbe_3_term_weak_form_1} leads to an different weak form: find 
$(u^h,u^r) \in H^1_{u^s}(\Omega_m) \times H^1_g(\Omega)$ such that
\begin{equation}
	\label{eq:pbe_3_term_weak_form_2}
	\begin{cases}
	\begin{aligned}
	&(\epsilon \nabla u^h, \nabla w)_m = 0\\
	&(\epsilon \nabla u^r, \nabla v) + (\okappa^2 N(u^r), v) + (\epsilon \nabla u^h, \nabla v)_m = - \langle \epsilon_m 
	\frac{\partial u^s}{\partial \bn} , v \rangle_\Gamma, 
	\end{aligned}
	\end{cases}
\end{equation}
for all $(w,v) \in H^1_0(\Omega_m) \times H^1_0(\Omega)$. The weak forms \eqref{eq:pbe_3_term_weak_form_1} and \eqref{eq:pbe_3_term_weak_form_2} are mathematically equivalent, however, the form \eqref{eq:pbe_3_term_weak_form_2} is amenable to defining the adjoint operator as  discussed in \S \ref{sec:apost_analysis}.

\subsection{Quantity of interest: solvation free energy}
The QoI may be any bounded linear functional of the{} weak solution $(u^h,u^r)$. An important physical quantity computed from the solution of the PBE is  electrostatic free energy of solvation \cite{GYW2007},
\begin{equation}
\Delta G_{\text{sol}} =  \frac{\alpha}{2} \int \sum_{i=1}^{m} \q_i \delta(\bx - \bx_i) (u^h(x) + u^r(x))\, \mathrm{d}x,
\end{equation}
where $\alpha = k_B T/e_c$. Unfortunately,  $\Delta G_{\text{sol}}$ is not a bounded linear functional in $H^1_0(\Omega)$ due to the presence of $\delta$ functions. A common approach is to ``mollify'' the unbounded functional~\cite{ABC+2011,BangerthRannacher,AO2000} to obtain a bounded linear functional. We thus define our quantity of interest to be a mollified version of solvation free energy, scaled by $2/\alpha$ for simplicity, as
\begin{equation}
\label{eq:qoi}
Q(u^h,u^r) =  \int_{\Omega_m} \sum_{i=1}^{m} \q_i  \eta^{-3} \rho( (x - x_i)/\eta) (u^h(x) + u^r(x))\, \mathrm{d}x = (\psi,u^h+u^r)_m,
\end{equation}
where 
\begin{equation}
\label{eq:psi}
\psi(x) = \sum_{i=1}^{m} \q_i  \eta^{-3} \rho( (x - x_i)/\eta),
\end{equation}
$\rho$ is the standard mollifier
\begin{equation}
\rho(x) = \begin{cases}
c e^{(-1/(1 - |x|^2))} \quad \text{ if } |x| < 1,\\
0 \quad \text{otherwise},
\end{cases}
\end{equation}
$|x|$ denotes the Euclidean norm of $x \in \mathbb{R}^3$ and $c$ is a scaling constant to ensure that $\int_{\mathbb{R}^3} \rho(x) = 1$.  Now, as $\eta \rightarrow 0$, $\eta^{-3} \rho(x/\eta) \rightarrow \delta (x)$. Hence the value of the QoI approaches the value  of the (scaled) solvation free energy for small values of $\eta$.

\subsection{Finite element method}
We discretize $\Omega$ and $\Omega_m$ into three dimensional triangulations $\mathcal{T}$ and $\mathcal{T}_m$. We assume that the interface $\Gamma$ is polygonal and exactly represented by the triangulation. Although the triangulations $\mathcal{T}$ and $\mathcal{T}_m$ may differ in $\Omega_m$, they respect the interface $\Gamma$ in the sense that $ (\cup_{T_m \in \mathcal{T}_m} {T}_m) \cap \Gamma = (\cup_{T \in \mathcal{T}} {T}) \cap \Gamma $. Each of these triangulations is arranged in such a way that the union of the elements of $\mathcal{T}$ (resp. $\mathcal{T}_m$ ) is $\Omega$ (resp. $\Omega_m$) and the intersection of any two elements is either a common edge, node, or is empty.
The finite element space consists of  continuous piecewise polynomials. We let $V_{h}  \subset H_0^1(\Omega)$ (resp. $V_{h}^m  \subset H_0^1(\Omega_m)$) denote the space of continuous piecewise polynomial  functions $v( x) \in \mathbb{R}$ defined on $\mathcal{T}$ (resp. $\mathcal{T}_m)$. Similarly, we let $V_{h,g}$  (resp. $V_{h,u_s}^m)$ be the affine space of continuous piecewise polynomial  functions $v( x) \in \mathbb{R}$ such that $v(x) = g(x)$ for $x$ on $\partial \Omega$ (resp. $v(x) = -u_s(x)$ for $x$ on $\Gamma = \partial \Omega_m$).

The discrete weak problem is to find 
$(U^h,U^r) \in V_{h,u_s}^m \times V_{h,g}$ such that
\begin{equation}
	\label{eq:pbe_3_term_weak_form_2_disc}
	\begin{cases}
	\begin{aligned}
	&(\epsilon \nabla U^h, \nabla w)_m = 0\\
	&(\epsilon \nabla U^r, \nabla v) + (\okappa^2 N(U^r), v) + (\epsilon \nabla U^h, \nabla v)_m = - \langle \epsilon_m 
	\frac{\partial u^s}{\partial \bn} , v \rangle_\Gamma, 
	\end{aligned}
	\end{cases}
\end{equation}
for all $(w,v) \in V_{h}^m \times V_{h}$. Note that throughout this article we use lower case letters for continuum solutions and uppercase letters for discrete solutions.

\section{Adjoint based \emph{a posteriori} analysis}
\label{sec:apost_analysis}
In this section we derive the adjoint equation corresponding to the PBE and then form error representations for both the linearized and nonlinear PBE.

\subsection{Abstract definition of adjoint operator and error representation}
The adjoint operator $\mathcal{L}^\ast : Y^\ast \rightarrow X^\ast$ of a linear operator $\mathcal{L}: X \rightarrow Y$ between Banach spaces $X,Y$  with dual spaces $X^\ast, Y^\ast$ is defined by the bilinear identity~\cite{MAS1996,Kato13,Yosida08},
\begin{equation}
\label{eq:bilin}
 \llangle \mathcal{L} x,y^\ast \rrangle_Y = \llangle x, \mathcal{L}^\ast y^\ast \rrangle_X, \quad x \in X,   y^\ast \in Y^\ast,
\end{equation}
where  $\llangle \cdot, \cdot \rrangle_S$ denotes duality-pairing in the space $S \in \lbrace X, Y \rbrace$. Now,  if $L u = f$ and $L^\ast \phi = \psi$, and $U$ is a discrete approximation to $u$,   we obtain a representation for the error $(u - U)$ as
\begin{equation}
\label{eq:abstract_err_rep}
\llangle \psi, u-U \rrangle_X = \llangle L^\ast \phi, u-U \rrangle_X = \llangle \phi, L u - L U \rrangle_Y = \llangle \phi, f  - LU\rrangle_Y.
\end{equation}
The above abstract error representation is a standard form for all adjoint based error analysis: residual(s) of the discrete solution weighted by the adjoint solution(s). The weighting of the residual by the adjoint solution accounts for the accumulation and cancellation of error in the discrete solution. We remark that the derivation in \eqref{eq:abstract_err_rep} is similar to the derivation of standard Green's functions in PDE analysis, and hence adjoint solutions may be thought of as generalized Green's functions~\cite{EHL05}.

\subsection{\emph{A posteriori} analysis of the linearized PBE}
This section forms an adjoint operator and an error representation for the linearized PBE.

\subsubsection{Adjoint operator for the linearized PBE}
In the context of the linearized PBE, the duality pairing $\llangle \mathcal{L} x,y^\ast \rrangle_Y$ is described by the left hand side of \eqref{eq:pbe_3_term_weak_form_2} with $N(u) = u$. Applying the definition \eqref{eq:bilin}  leads to the following adjoint problem: find $(\phi^h, \phi^r) \in H^1_0(\Omega_m) \times H^1_0(\Omega)$ such that
\begin{equation}
\begin{cases}
	\label{eq:pbe_3_term_adj}
	\begin{aligned}
	&(\epsilon \nabla \phi^r, \nabla v) + (\okappa^2 \phi^r, v)   =  (\psi,v)_m,\\
	&(\epsilon \nabla \phi^h, \nabla w)_m  + (\epsilon \nabla \phi^r, \nabla w)_m =  (\psi, w)_m,
	\end{aligned}
	\end{cases}
\end{equation}
for all $w,v \in H^1_0(\Omega_m) \times H^1_0(\Omega)$. 
Here $\psi$ arises from the definition of the QoI, see \eqref{eq:psi}.
Observe that \eqref{eq:pbe_3_term_adj} is also a one way coupled system, similar to \eqref{eq:pbe_3_term_weak_form_2}, however, the direction of coupling is now reversed: we first solve for the component in $\Omega$ and use that in the equation posed on $\Omega_m$.

\begin{rem}
In Section 4.2 of \cite{ABC+2011},  an adjoint to the three term split PBE is defined as:
find $(w^h, w^r) \in H^1_0(\Omega_m) \times H^1_0(\Omega)$ such that
\begin{equation}
	\label{eq:pbe_3_term_adj_wrong}
	\begin{cases}
	\begin{aligned}
	&(\epsilon \nabla w^r, \nabla v) + (\okappa^2 w^r, v)   =  ({\psi},v)_m,\\
	&(\epsilon \nabla w^h, \nabla w)_m  + \langle  w^h , \epsilon_m 
	\frac{\partial w}{\partial \bn}  \rangle_\Gamma =  ({\psi}, w)_m,
	\end{aligned}
	\end{cases}
\end{equation}
for all $(w,v) \in H^1_0(\Omega_m) \times H^1_0(\Omega)$. However, this is not a well-posed problem as $\langle \epsilon_m 
	\frac{\partial w}{\partial \bn} , w^h \rangle_\Gamma$ is not  continuous in $w$ for all $w \in H^1_0(\Omega_m)$. Continuity of $\langle \epsilon_m 
	\frac{\partial w}{\partial \bn} , w^h \rangle_\Gamma$ requires addition regularity on $w$ e.g. $w \in H^{\frac{3}{2}}(\Omega)$.
\end{rem}

\subsubsection{Error representation for the linearized PBE}

The effect on approximating the boundary  conditions on the interface $\Gamma$ for  $U^h$ may have significant effect on the accuracy of the method. Hence, we quantify the effect of boundary conditions, both at $\Gamma = \partial \Omega_m$ corresponding to the harmonic component $U^h$ and at $\partial \Omega$ corresponding to the regular component $U^r$. 
We employ the decompositions 
\begin{equation}
\label{eq:bdry_decomp_u}
u^r =  u_0^r + {u}_d^r \qquad \text{and} \qquad u^h =  u_0^h + {u}_d^h
\end{equation}
where $u_0^r \in H^1_0(\Omega)$ (resp. $u_0^h \in H^1_0(\Omega_m)$) and ${u}_d^r \in H^1(\Omega)$ (resp. ${u}_d^h \in H^1(\Omega_m)$ ) such that ${u}_d^r  = g$ on $\partial \Omega$ (resp. ${u}_d^h  = -u^s$ on $\partial \Omega_m = \Gamma$) . Similarly we have the decompositions,
 \begin{equation}
 \label{eq:bdry_decomp_U}
 U^r = U^r_0 + {U}_d^r  \qquad \text{and} \qquad  U^h = U^h_0 + {U}_d^h
 \end{equation}
where $U^r_0 \in V_{h}$ (resp. $U^h_0 \in V_{h}^m$) and ${U}_d^r \in V_{h,g}$ (resp. ${U}_d^r \in V_{h,u_s}^m$). Note that due to the finite dimension of $V_{h,g}$ and $V_{h,u_s}^m$
and the nature of the boundary conditions $g$ and $-u^s$, ${u}_d^r \neq {U}_d^r$ and ${u}_d^h \neq {U}_d^h$. Moreover, there are infinitely many choices for the functions $ u_d^h, u_d^r, U_d^h, U_d^h $ and we assume a choice is made such that these functions are known. This leads to the following error representation.

\begin{thm}
\label{thm:err_rep_lin_pbe}
Let $(u^h,u^r)$ be the true solutions to the linearized PBE~\eqref{eq:pbe_3_term_weak_form_2} with $N(u^r) = u^r$, $(U^h,U^r)$ be the finite element solutions to the discrete weak form \eqref{eq:pbe_3_term_weak_form_2_disc} and $(\phi^h,\phi^r)$ be the solutions to the adjoint weak form~\eqref{eq:pbe_3_term_adj}. Then the error in the QoI~\eqref{eq:qoi} is given by
\begin{equation}
\label{eq:err_rep_lin}
Q(u^h-U^h, u^r-U^r) = E^r + E^m + E^\Gamma + E^{\partial \Omega} + E^{\mathrm{neg}},
\end{equation}
where
\begin{equation}
	\begin{aligned}
	&E^r && = 	-\langle \epsilon_m 
	\frac{\partial u^s}{\partial \bn} , \phi^r \rangle_\Gamma - (\epsilon \nabla U^r, \nabla \phi^r) - (\okappa^2 U^r, \phi^r) - (\epsilon \nabla U^h, \nabla \phi^r)_m \\
	&E^m &&= -(\epsilon \nabla U^h, \nabla \phi^r)_m\\
	&E^{\mathrm{neg}} &&= Q( u_d^h - U^h_d, u_d^r - U_d^r ) = (\psi, (u_d^h - U^h_d) +  (u_d^r - U_d^r) )_m\\
	&E^{\partial \Omega} &&= (\epsilon \nabla \phi^r, \nabla (U^r_d - u^r_d)) + (\okappa^2 \phi^r, (U^r_d - u^r_d)) \\
	&E^\Gamma &&=  (\epsilon \nabla \phi^h, \nabla (U^h_d - u^h_d))_m  + (\epsilon \nabla \phi^r, \nabla (U^h_d - u^h_d))_m.
	\end{aligned}
\end{equation}

\end{thm}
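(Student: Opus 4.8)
The plan is to represent the QoI error through the adjoint bilinear identity, exactly mirroring the abstract template \eqref{eq:abstract_err_rep}, and then to sort the resulting terms according to the discretization feature each one measures. Write $B((a^h,a^r),(b^h,b^r))$ for the bilinear form appearing on the left of \eqref{eq:pbe_3_term_weak_form_2} with $N(u^r)=u^r$, so that the primal problem reads $B((u^h,u^r),(w,v)) = -\langle \epsilon_m \frac{\partial u^s}{\partial \bn},v\rangle_\Gamma$ for all $(w,v)\in H^1_0(\Omega_m)\times H^1_0(\Omega)$, and the adjoint \eqref{eq:pbe_3_term_adj} is precisely $B((b^h,b^r),(\phi^h,\phi^r)) = (\psi,b^h+b^r)_m$ for all admissible $(b^h,b^r)$.

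First I would use the decompositions \eqref{eq:bdry_decomp_u}--\eqref{eq:bdry_decomp_U} to split the error as $Q(u^h-U^h,u^r-U^r) = (\psi, e_0^h+e_0^r)_m + E^{\mathrm{neg}}$, where $e_0^h := u_0^h-U_0^h \in H^1_0(\Omega_m)$ and $e_0^r := u_0^r-U_0^r\in H^1_0(\Omega)$ are the homogeneous parts and $E^{\mathrm{neg}}=(\psi,(u_d^h-U_d^h)+(u_d^r-U_d^r))_m$ collects the lift differences. The essential point of this step is that $e_0^h$ and $e_0^r$ lie in the homogeneous spaces, so the adjoint identity may legitimately be applied to them; all inhomogeneous data on $\Gamma$ and $\partial\Omega$ is quarantined inside $E^{\mathrm{neg}}$.

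Next I would feed $(e_0^h,e_0^r)$ into the adjoint identity, $(\psi,e_0^h+e_0^r)_m = B((e_0^h,e_0^r),(\phi^h,\phi^r))$, and exploit linearity of $B$ in its first slot together with $e_0^h=(u^h-U^h)-(u_d^h-U_d^h)$ and $e_0^r=(u^r-U^r)-(u_d^r-U_d^r)$ to write it as $B((u^h-U^h,u^r-U^r),(\phi^h,\phi^r)) - B((u_d^h-U_d^h,u_d^r-U_d^r),(\phi^h,\phi^r))$. Expanding the second form term by term and regrouping by subdomain identifies it, up to sign, with $E^\Gamma + E^{\partial\Omega}$: the two $\Omega_m$-contributions carrying the lift $U_d^h-u_d^h$ assemble $E^\Gamma$, and the two $\Omega$-contributions carrying $U_d^r-u_d^r$ assemble $E^{\partial\Omega}$. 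For the first form I would again use linearity, as $B((u^h,u^r),\cdot)-B((U^h,U^r),\cdot)$. The primal weak form collapses $B((u^h,u^r),(\phi^h,\phi^r))$ to the single load term $-\langle\epsilon_m\frac{\partial u^s}{\partial \bn},\phi^r\rangle_\Gamma$, since the harmonic equation contributes $(\epsilon\nabla u^h,\nabla\phi^h)_m=0$ when tested against $\phi^h\in H^1_0(\Omega_m)$, while $B((U^h,U^r),(\phi^h,\phi^r))$ is just the computable bilinear form evaluated at the discrete solution.

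The final step is bookkeeping: among the four terms of $-B((U^h,U^r),(\phi^h,\phi^r))$, the three weighted by $\phi^r$ combine with the load term to give $E^r$, and the leftover coupling term, in which the discrete harmonic component $U^h$ is paired with an adjoint component on $\Omega_m$, is $E^m$. I expect this last attribution to be the only genuinely delicate point. The expansion produces two distinct $U^h$-residuals on $\Omega_m$, namely $-(\epsilon\nabla U^h,\nabla\phi^r)_m$ (which belongs inside the regular-component residual $E^r$) and $-(\epsilon\nabla U^h,\nabla\phi^h)_m$ (which is the leftover), and I would check carefully that $E^m$ is the latter rather than a second copy of the former, since mixing up the two adjoint weights $\phi^h$ and $\phi^r$ here is the natural place for an indexing slip. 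Everything else is an exact algebraic identity requiring no Galerkin orthogonality, and indeed the discrete equations \eqref{eq:pbe_3_term_weak_form_2_disc} are not needed for the identity itself; they serve only to render the residuals $E^r$ and $E^m$ small and computable in practice. The remaining obstacle is therefore organizational rather than analytical: correctly tracking which adjoint component weights each residual across the two meshes and the two interfaces $\Gamma$ and $\partial\Omega$, and handling the non-matching lifts $u_d\neq U_d$.
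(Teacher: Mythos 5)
Your proposal is correct and follows essentially the same route as the paper's proof: split off $E^{\mathrm{neg}}$ via the boundary lifts, test the summed adjoint equations with the homogeneous error $(u_0^h-U_0^h,\,u_0^r-U_0^r)$, re-expand using $u_0 = u - u_d$, and invoke the primal weak form to collapse the true-solution terms to the load $-\langle \epsilon_m \frac{\partial u^s}{\partial \bn},\phi^r\rangle_\Gamma$. The point you flag about the two $U^h$-residuals is exactly right: your derivation gives $E^m = -(\epsilon\nabla U^h,\nabla\phi^h)_m$, which agrees with the paper's own proof (equation \eqref{eq:temp_err_2}) and with \eqref{eq:err_combined}, whereas the theorem statement prints $\nabla\phi^r$ there --- an apparent typo in the statement, not a gap in your argument.
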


\begin{proof}
The tuple $(u^h - U^h, u^r - U^r)$ is not in $H^1_0(\Omega_m) \times H^1_0(\Omega)$. However, if we use the decompositions \eqref{eq:bdry_decomp_u} and \eqref{eq:bdry_decomp_U} along with the linearity of the QoI $Q$,
\begin{equation}
\label{eq:temp_err_0}
\begin{aligned}
	Q(u^h - U^h, u^r - U^r) &=  Q( (u_0^h + U^h_d) - U^h,  (u^r_0  + U^r_d) - U^r  ) + Q( u_d^h - U^h_d, u_d^r - U_d^r )\\
	&=  Q( (u_0^h + U^h_d) - U^h,  (u^r_0  + U^r_d) - U^r ) + E^{\mathrm{neg}}.
	\end{aligned}
\end{equation}
The tuple $((u_0^h + U^h_d) - U^h,  (u^r_0  + U^r_d) - U^r) $ is in $H^1_0(\Omega_m) \times H^1_0(\Omega)$. Hence, setting $w = ((u_0^h + U^h_d) - U^h)$ and $v = ((u^r_0  + U^r_d) - U^r)$ in the adjoint equation \eqref{eq:pbe_3_term_adj} and adding the two equations leads to,

\begin{equation}
\label{eq_temp_err_3}
\begin{aligned}
&Q( (u_0^h + U^h_d) - U^h,  (u^r_0  + U^r_d) - U^r)  )  = (\psi, (u_0^h + U^h_d) - U^h,  (u^r_0  + U^r_d) - U^r )_m\\
&= (\epsilon \nabla \phi^r, \nabla ((u^r_0  + U^r_d) - U^r)) + (\okappa^2 \phi^r, (u^r_0  + U^r_d) - U^r) \\
& \qquad +  (\epsilon \nabla \phi^h, \nabla ((u_0^h + U^h_d) - U^h))_m  + (\epsilon \nabla \phi^r, \nabla ((u_0^h + U^h_d) - U^h))_m.
\end{aligned}
\end{equation}
Substituting $u^r_0 = (u^r_0 +u^r_d) - u^r_d = u^r - u^r_d $ and similarly $u^h_0 = u^h - u^h_d$ and  rearranging,
\begin{equation}
\label{eq:temp_err_1}
\begin{aligned}
&Q( (u_0^h + U^h_d) - U^h,  (u^r_0  + U^r_d) - U^r  ) \\
 &=(\epsilon \nabla \phi^r, \nabla u^r) + (\okappa^2 \phi^r, u^r) + (\epsilon \nabla \phi^h, \nabla u^h)_m  + (\epsilon \nabla \phi^r, \nabla u^h)_m\\
 & -\left((\epsilon \nabla \phi^r, \nabla U^r) + (\okappa^2 \phi^r, U^r) + (\epsilon \nabla \phi^h, \nabla U^h)_m  + (\epsilon \nabla \phi^r, \nabla U^h)_m)\right)\\
 &=(\epsilon \nabla \phi^r, \nabla (U^r_d - u^r_d)) + (\okappa^2 \phi^r, (U^r_d - u^r_d)) + (\epsilon \nabla \phi^h, \nabla (U^h_d - u^h_d))_m  \\
 &+ (\epsilon \nabla \phi^r, \nabla (U^h_d - u^h_d))_m.
\end{aligned}	
\end{equation}
Now, since $(u^h,u^r)$ is the true solution, it satisfies the weak form \eqref{eq:pbe_3_term_weak_form_2}. Substituting this in \eqref{eq:temp_err_1} and rearranging terms,
\begin{equation}
\label{eq:temp_err_2}
\begin{aligned}
&Q( (u_0^h + U^h_d) - U^h,  (u^r_0  + U^r_d) - U^r)   \\
&= - \langle \epsilon_m \frac{\partial u^s}{\partial \bn} , \phi^r \rangle_\Gamma  - (\epsilon \nabla \phi^r, \nabla U^r) - (\okappa^2 \phi^r, U^r)   - (\epsilon \nabla \phi^r, \nabla U^h)_m\\
&\quad -(\epsilon \nabla \phi^h, \nabla U^h)_m + (\epsilon \nabla \phi^r, \nabla (U^r_d - u^r_d)) + (\okappa^2 \phi^r, (U^r_d - u^r_d)) \\
& \quad + (\epsilon \nabla \phi^h, \nabla (U^h_d - u^h_d))_m  + (\epsilon \nabla \phi^r, \nabla (U^h_d - u^h_d))_m
\end{aligned}	
\end{equation}
Combining \eqref{eq:temp_err_0} and \eqref{eq:temp_err_2} completes the proof.

\end{proof}

In the above theorem $E^r$,  $E^m$,  $E^\Gamma$,  $E^{\partial \Omega}$ and $E^{\mathrm{neg}}$ denote different sources of error. The first four terms $E^r$,  $E^m$,  $E^\Gamma$ and $E^{\partial \Omega}$ have the form of adjoint weighted residuals and reflect error contributions due to FEM solution of $u^r$, FEM solution of $u^h$, representation of boundary data for $u^h$ and representation of boundary data for $u^r$. The term $E^{\mathrm{neg}}$, which is computable since all the functions involved are known, is referred to as the ``negligible'' component of error as it is typically negligible due to the standard choice of the boundary functions. 
See \S \ref{sec:num_exps} for more details on the choice of the boundary functions involved as well as the numerical value of this term.

\subsection{\emph{A posteriori} Analysis of the nonlinear PBE}

We now extend the ideas for the linearized PBE to derive an adjoint and error representation for the nonlinear PBE.

\subsubsection{Adjoint operator for the nonlinear PBE}
The extension of the above approach to the nonlinear PBE is complicated by the fact that there is no unique definition of an adjoint operator corresponding to a nonlinear differential operator. Rather, an adjoint problem useful for the purpose at hand has to be selected. A common choice useful for various kinds of analysis  is based on linearization~\cite{MAS1996,Marchuk1995}. Defining $z = su + (1-s)U$ and $\overline{\alpha}(x) =   \int_0^1  \cosh(z(x)) \, \mathrm{d}s$, we observe that
\begin{equation}
\label{eq:adj_non_prop}
\sinh(u^r) - \sinh(U^r) = \int_0^1 \frac{d}{ds} \sinh(z) \, \mathrm{d}s =  \int_0^1  \cosh(z) \, \mathrm{d}s \, \, (u^r - U^r) = \overline{\alpha}(u^r - U^r).
\end{equation}
Then the adjoint corresponding to the nonlinear PBE~\eqref{eq:pbe_3_term_weak_form_2} is: find $(\phi^h, \phi^r) \in$ $\phantom a$ $H^1_0(\Omega_m) \times H^1_0(\Omega)$ such that
\begin{equation}
	\label{eq:pbe_3_term_adj_nonlin}
	\begin{cases}
	\begin{aligned}
	&(\epsilon \nabla \phi^r, \nabla v) + (\okappa^2  \overline{\alpha} \phi^r, v)   =  (\psi,v)_m,\\
	&(\epsilon \nabla \phi^h, \nabla w)_m  + (\epsilon \nabla \phi^r, \nabla w)_m =  (\psi, w)_m,
	\end{aligned}
	\end{cases}
\end{equation}
for all $w,v \in H^1_0(\Omega_m) \times H^1_0(\Omega)$. In practice, we cannot compute the linearization $\overline{\alpha}$ since we do not know the true solution $u^r$. Instead, the differential operator is typically linearized around the numerical solution, in this case $U^r$. The resulting estimate can be shown to converge to the true estimate in the limit of refined discretization \cite{Estep:Larson:00}. In practice, this approach yields robustly accurate error estimates.

\subsubsection{Error representation for the nonlinear PBE}
The above adjoint equation leads to the following error representation for the nonlinear PBE.
\begin{thm}
\label{tmh:non_pbe_err_rep}
Let $(u^h,u^r)$ be the true solutions to the nonlinear PBE~\eqref{eq:pbe_3_term_weak_form_2} with $N(u^r) = \sinh(u^r)$, $(U^h,U^r)$ be the finite element solutions to the discrete weak form \eqref{eq:pbe_3_term_weak_form_2_disc} and $(\phi^h,\phi^r)$ be the solutions to the adjoint weak form~\eqref{eq:pbe_3_term_adj}. Then the error in the QoI~\eqref{eq:qoi} is given by,
\begin{equation}
\label{eq:err_rep_nonlin}
Q(u^h-U^h, u^r-U^r) = E^r + E^m + E^\Gamma + E^{\partial \Omega} + E^{\mathrm{neg}},
\end{equation}
where
\begin{equation}
\label{eq:err_comps_nonlin}
	\begin{aligned}
	&E^r && = 	-\langle \epsilon_m 
	\frac{\partial u^s}{\partial \bn} , \phi^r \rangle_\Gamma - (\epsilon \nabla U^r, \nabla \phi^r) - (\okappa^2 \sinh(U^r), \phi^r) - (\epsilon \nabla U^h, \nabla \phi^r)_m \\
	&E^m &&= -(\epsilon \nabla U^h, \nabla \phi^r)_m\\
	&E^{\mathrm{neg}} &&= Q( (u_d^h - U^h_d, u_d^r - U_d^r) ) = (\psi, (u_d^h - U^h_d) +  (u_d^r - U_d^r) )_m\\
	&E^{\partial \Omega} &&= (\epsilon \nabla \phi^r, \nabla (U^r_d - u^r_d)) + (\okappa^2 \overline{\alpha} \phi^r, (U^r_d - u^r_d)) \\
	&E^\Gamma &&=  (\epsilon \nabla \phi^h, \nabla (U^h_d - u^h_d))_m  + (\epsilon \nabla \phi^r, \nabla (U^h_d - u^h_d))_m
	\end{aligned}
\end{equation}

\end{thm}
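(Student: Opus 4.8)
The plan is to follow the proof of Theorem~\ref{thm:err_rep_lin_pbe} line for line, substituting the nonlinear adjoint \eqref{eq:pbe_3_term_adj_nonlin} for the linear one and inserting the linearization identity \eqref{eq:adj_non_prop} at the single place where the reaction term enters. First I would use the linearity of $Q$ together with the boundary decompositions \eqref{eq:bdry_decomp_u} and \eqref{eq:bdry_decomp_U} to peel off the computable term $E^{\mathrm{neg}} = Q(u_d^h - U_d^h,\, u_d^r - U_d^r)$, exactly as in \eqref{eq:temp_err_0}, leaving the residual tuple $\big((u_0^h + U_d^h) - U^h,\ (u_0^r + U_d^r) - U^r\big)$, which lies in $H^1_0(\Omega_m)\times H^1_0(\Omega)$. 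I would then insert this tuple as the test pair $(w,v)$ in the two equations of the nonlinear adjoint \eqref{eq:pbe_3_term_adj_nonlin} and add them; since $Q(w,v)=(\psi,w+v)_m$, the two right-hand sides $(\psi,v)_m+(\psi,w)_m$ reassemble $Q$ of the residual tuple, so that $Q$ is rewritten as a sum of $\overline{\alpha}$-weighted adjoint residuals.

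Next I would substitute $u_0^r = u^r - u_d^r$ and $u_0^h = u^h - u_d^h$ and expand each bilinear and inner-product term, splitting every factor into a true-minus-discrete part $(u^r-U^r)$, $(u^h-U^h)$ and a data part $(U_d^r-u_d^r)$, $(U_d^h-u_d^h)$, mirroring \eqref{eq:temp_err_1}. The one genuinely new step is the nonlinear reaction contribution $(\okappa^2 \overline{\alpha}\,\phi^r, (u_0^r+U_d^r)-U^r)$: on its error part I would invoke \eqref{eq:adj_non_prop}, namely $\overline{\alpha}\,(u^r-U^r)=\sinh(u^r)-\sinh(U^r)$, which converts it into $(\okappa^2\sinh(u^r),\phi^r)-(\okappa^2\sinh(U^r),\phi^r)$; here the $-(\okappa^2\sinh(U^r),\phi^r)$ piece is routed into the discrete residual $E^r$, while the data part $(\okappa^2\overline{\alpha}\,\phi^r,(U_d^r-u_d^r))$ is kept verbatim with its $\overline{\alpha}$ weight and becomes the reaction contribution to $E^{\partial\Omega}$. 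All the $\epsilon$-gradient terms are handled identically to the linear case, since $N$ does not appear there.

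Finally I would collapse the true-solution terms by testing the continuum weak form \eqref{eq:pbe_3_term_weak_form_2}: choosing $v=\phi^r$ in the regular equation gives $(\epsilon\nabla u^r,\nabla\phi^r)+(\okappa^2\sinh(u^r),\phi^r)+(\epsilon\nabla u^h,\nabla\phi^r)_m=-\langle\epsilon_m\frac{\partial u^s}{\partial\bn},\phi^r\rangle_\Gamma$, and choosing $w=\phi^h$ in the harmonic equation gives $(\epsilon\nabla u^h,\nabla\phi^h)_m=0$; together these reduce all true-solution contributions to the single interface flux $-\langle\epsilon_m\frac{\partial u^s}{\partial\bn},\phi^r\rangle_\Gamma$, which heads $E^r$. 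Regrouping the surviving discrete residuals then reproduces $E^r$ (the regular-equation residual, now carrying $\sinh(U^r)$), the harmonic coupling residual $E^m$, and the interface/outer-boundary data terms $E^\Gamma$ and $E^{\partial\Omega}$; adding back $E^{\mathrm{neg}}$ from the first step yields \eqref{eq:err_rep_nonlin}. I expect the main obstacle to be purely bookkeeping rather than analytic: one must track carefully that $\sinh(U^r)$ lands in $E^r$ while the $\overline{\alpha}$-weighting is retained only in the $E^{\partial\Omega}$ data term, and one should note that the representation is \emph{exact} only because $\overline{\alpha}$ in \eqref{eq:pbe_3_term_adj_nonlin} is built from the unknown $u^r$; replacing $u^r$ by $U^r$ in practice (as discussed after \eqref{eq:pbe_3_term_adj_nonlin}) introduces the only approximation, which vanishes under refinement.
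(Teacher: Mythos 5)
Your proposal is correct and follows essentially the same route as the paper's own (much terser) proof: the paper likewise reduces the argument to the linear case and isolates the single new step, namely splitting $(\okappa^2\overline{\alpha}\phi^r,(u^r_0+U^r_d)-U^r)$ into the error part, converted via \eqref{eq:adj_non_prop} to $(\okappa^2\sinh(u^r),\phi^r)-(\okappa^2\sinh(U^r),\phi^r)$, and the $\overline{\alpha}$-weighted data part that lands in $E^{\partial\Omega}$. You also correctly read past the typo in the theorem statement (which cites the linear adjoint \eqref{eq:pbe_3_term_adj} where the nonlinear adjoint \eqref{eq:pbe_3_term_adj_nonlin} is intended), so nothing further is needed.
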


\begin{proof}
The proof is similar to the proof of Theorem~\ref{thm:err_rep_lin_pbe}. The difference is in the term $(\okappa^2 \phi^r, (u^r_0  + U^r_d) - U^r))$ in \eqref{eq_temp_err_3} which now becomes,
\begin{equation}
	\begin{aligned}
		&(\okappa^2  \overline{\alpha} \phi^r, (u^r_0  + U^r_d) - U^r))\\
		&= (\okappa^2  \overline{\alpha} \phi^r, (u^r - U^r)) + (\okappa^2  \overline{\alpha} \phi^r, U^r_d - u^r_d))
	\end{aligned}
\end{equation}
where we again made the substitution $u^r_0 = u^r - u^r_d $. Combining the above equation with \eqref{eq:adj_non_prop} leads to
\begin{equation}
\begin{aligned}
		&(\okappa^2  \overline{\alpha} \phi^r, (u^r_0  + U^r_d) - U^r))\\
		&= (\phi^r, \okappa^2  \sinh(u^r)) - (\phi^r, \okappa^2   \sinh(U^r))+ (\okappa^2  \overline{\alpha} \phi^r, U^r_d - u^r_d).
\end{aligned}	
\end{equation}
\end{proof}

\subsubsection{Error representation for the alternate formulation of the PBE}
In this article, the focus is on quantifying the error due to the solution of the FEM problem in \eqref{eq:pbe_3_term_weak_form_2_disc} which corresponds to the solution of \eqref{eq:pbe_3_term_weak_form_2}. However, some existing codes may be based on the discrete solution of the weak form \eqref{eq:pbe_3_term_weak_form_1}.
 In such a case, the error representation is easily modified as shown in the next theorem. 

\begin{thm}
\label{tmh:non_pbe_err_rep_formulation_1}
Let $(u^h,u^r)$ be the true solutions to the (linearized or nonlinear) PBE~\eqref{eq:pbe_3_term_weak_form_2}, $(U^h,U^r)$ be the finite element solutions to the  discrete weak form corresponding to \eqref{eq:pbe_3_term_weak_form_1} and $(\phi^h,\phi^r)$ be the solutions to the adjoint weak form~\eqref{eq:pbe_3_term_adj}. Then the error in the QoI~\eqref{eq:qoi} is given by,
\begin{equation}
\label{eq:err_rep_nonlin_formulation_1}
Q(u^h-U^h, u^r-U^r) = \tilde{E}^r + E^m + E^\Gamma + E^{\partial \Omega} + \tilde{E}^{\mathrm{har}}+ E^{\mathrm{neg}},
\end{equation}
where
\begin{equation}
\label{eq:err_comps_nonlin_formulation_1}
	\begin{aligned}
	&E^r && = 	-\langle \epsilon_m 
	\frac{\partial u^s}{\partial \bn} , \phi^r \rangle_\Gamma - (\epsilon \nabla U^r, \nabla \phi^r) - (\okappa^2 N(U^r), \phi^r) - \langle \epsilon_m 
	\frac{\partial U^h}{\partial \bn}, \phi^r \rangle_\Gamma  \\
	&E^{\mathrm{har}} &&= \langle \epsilon_m	\frac{\partial U^h}{\partial \bn}, \phi^r \rangle_\Gamma - (\epsilon \nabla U^h, \nabla \phi^r)_m
	\end{aligned}
\end{equation}
and the remaining terms are the same as in Theorem \ref{tmh:non_pbe_err_rep}.
\end{thm}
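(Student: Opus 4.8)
The plan is to reduce everything to the representation already established in Theorem~\ref{tmh:non_pbe_err_rep} (or Theorem~\ref{thm:err_rep_lin_pbe} in the linear case) and then merely regroup its terms. The crucial observation I would state first is that those earlier proofs never invoke the discrete equations~\eqref{eq:pbe_3_term_weak_form_2_disc}: they use only the boundary decompositions~\eqref{eq:bdry_decomp_u}--\eqref{eq:bdry_decomp_U}, the adjoint problem~\eqref{eq:pbe_3_term_adj}, and the \emph{continuum} weak form~\eqref{eq:pbe_3_term_weak_form_2} satisfied by $(u^h,u^r)$. Consequently the identity $Q(u^h-U^h,u^r-U^r)=E^r+E^m+E^\Gamma+E^{\partial\Omega}+E^{\mathrm{neg}}$, with the terms defined exactly as in~\eqref{eq:err_comps_nonlin}, holds verbatim for \emph{any} pair $(U^h,U^r)$ carrying the prescribed boundary data, and in particular for the discrete solution produced by~\eqref{eq:pbe_3_term_weak_form_1} instead of~\eqref{eq:pbe_3_term_weak_form_2}. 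Making this point explicit frees the argument from any dependence on which discrete weak form $(U^h,U^r)$ solves.

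The second step is a one-line algebraic split of the harmonic-coupling residual inside $E^r$. In~\eqref{eq:err_comps_nonlin} this coupling enters through the volume term $-(\epsilon\nabla U^h,\nabla\phi^r)_m$, which is natural to~\eqref{eq:pbe_3_term_weak_form_2}; for a code based on~\eqref{eq:pbe_3_term_weak_form_1} the natural residual instead involves the interface flux $\langle \epsilon_m \frac{\partial U^h}{\partial \bn},\phi^r\rangle_\Gamma$. I would therefore add and subtract this boundary pairing,
\begin{equation}
-(\epsilon\nabla U^h,\nabla\phi^r)_m = -\langle \epsilon_m \frac{\partial U^h}{\partial \bn},\phi^r\rangle_\Gamma + \Bigl(\langle \epsilon_m \frac{\partial U^h}{\partial \bn},\phi^r\rangle_\Gamma - (\epsilon\nabla U^h,\nabla\phi^r)_m\Bigr).
\end{equation}
Absorbing the first piece into the regular-component residual yields $\tilde{E}^r$, the parenthesised remainder is exactly $\tilde{E}^{\mathrm{har}}$, and every other term ($E^m$, $E^\Gamma$, $E^{\partial\Omega}$, $E^{\mathrm{neg}}$) is carried over unchanged, giving~\eqref{eq:err_rep_nonlin_formulation_1}. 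Since $\tilde{E}^r+\tilde{E}^{\mathrm{har}}$ collapses back to $E^r$, the total estimate is unaltered; the content of the theorem lies entirely in the \emph{partitioning}, which isolates the discrepancy between the two formulations.

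The one point that genuinely needs care — and the natural analogue of the obstacle flagged in the Remark following~\eqref{eq:pbe_3_term_adj_wrong} — is the well-definedness of $\langle \epsilon_m \frac{\partial U^h}{\partial \bn},\phi^r\rangle_\Gamma$. The hard part is therefore not the computation but justifying this pairing, and I would argue it is legitimate here precisely because $U^h$ is a concrete finite element (piecewise polynomial) function, so $\frac{\partial U^h}{\partial \bn}\in L_2(\Gamma)$ elementwise, paired against the trace of $\phi^r\in H^1_0(\Omega)$, which lies in $H^{1/2}(\Gamma)\subset L_2(\Gamma)$; the unbounded normal trace of a generic $H^1_0(\Omega_m)$ function that rendered~\eqref{eq:pbe_3_term_adj_wrong} ill-posed never appears. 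I would close by observing, via Green's identity on $\Omega_m$, that $\tilde{E}^{\mathrm{har}} = (\epsilon\nabla^2 U^h,\phi^r)_m$, which makes transparent that the new term measures the failure of the discrete harmonic component to be exactly harmonic and is the single additional error source attributable to discretizing~\eqref{eq:pbe_3_term_weak_form_1}.
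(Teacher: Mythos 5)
Your proposal is correct and follows essentially the same route as the paper, whose entire proof is the add-and-subtract of $\langle \epsilon_m \frac{\partial U^h}{\partial \bn}, \phi^r \rangle_\Gamma$ in the term $E^r$ of Theorem~\ref{tmh:non_pbe_err_rep}. Your additional observations --- that the earlier error representation never invokes the discrete equations and so applies verbatim to the solution of~\eqref{eq:pbe_3_term_weak_form_1}, and that the interface pairing is well defined because $U^h$ is piecewise polynomial --- are correct and make explicit points the paper leaves implicit.
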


\begin{proof}
Adding and subtracting $\langle \epsilon_m	\frac{\partial U^h}{\partial \bn}, \phi^r \rangle_\Gamma$ to term $E^r$ in Theorem \ref{tmh:non_pbe_err_rep} completes the proof.
\end{proof}

\section{Refinement strategies based on \emph{a posteriori} error estimates}
\label{sec:refine_strategies_apost_ideas}

This section discusses the accuracy of \emph{a posteriori} error estimates and the potential for obtaining accurate QoI values using the error information to refine the discretization.

\subsection{\emph{A posteriori} error estimates: implementation and accuracy}
The error representations \eqref{eq:err_rep_lin} and \eqref{eq:err_rep_nonlin} involve analytic adjoint solutions $(\phi^h,\phi^r)$ and representation of boundary condition{}s by functions $(u^h_d, u^r_d)$. In practice, these quantities need to be estimated computationally. 
As is common in literature for adjoint based \emph{a posteriori} analysis, the adjoint solutions are approximated in a space $W^h$ (resp. $W^r$) such that $V^h \subset W^h$ (resp. $V^r \subset W^r$)~\cite{Estep:Larson:00,estep_sinum_95,eehj_actanum_95,CET+2016,CEG+2015,Estep:Larson:00,estep_sinum_95,eehj_actanum_95,CET+2016,CEG+2015,CBH+2014,CET+2016,CET-09,barth04}. 
$W^h$ may be obtained by  refining the mesh or by increasing the polynomial order. 
Similarly, the functions $(u^h_d, u^r_d)$ are approximated in $W^h$, such that they satisfy the  boundary condition exactly on a boundary vertex  and are zero on the interior vertices. 
These approximations lead to error estimates from the error representations \eqref{eq:err_rep_lin} and \eqref{eq:err_rep_nonlin}. Since the formulas are similar, except that the $(\phi^h,\phi^r)$ and $(u^h_d, u^r_d)$  are replaced by their approximations, we avoid re-writing the error estimates explicitly and instead now refer to \eqref{eq:err_rep_lin} and \eqref{eq:err_rep_nonlin}  as error estimates. 

The accuracy of the error estimate is measured by the effectivity ratio defined as
\begin{equation*}
\gamma_{\rm eff} = \frac{\mbox{Estimated error}}{\mbox{True error}} \, .
\end{equation*}
An accurate error estimator has an effectivity ratio close to one. Since the true solution is not known, we compute a more accurate reference numerical solution using a higher dimensional space for measuring the true error.

\subsection{Guiding refinement decisions using error estimates}

\subsubsection{Error Contributions and cancellation of error}
Once the error estimate is in place, its various components $E^r, E^m,  E^\Gamma$ and  $E^{\partial \Omega}$  reflect different sources of error. Refinement strategies based on these components can then be derived. For example, if  $E^\Gamma $ is the dominant component, then simplices in $ \mathcal{T}_m$ which intersect with the interface $\Gamma$ may be refined to reduce the error. This strategy of refining the mesh is quite different from classical adaptive refinement schemes. One main difference is that, in refining the simplices on the interface to reduce $E^\Gamma$ we may use either uniform refinement or an adaptive refinement strategy. The other difference is in the treatment of cancellation of errors which we now discuss.

Classical adaptive refinement schemes form  elemental error indicators and refine elements which have the largest value of such indicators~\cite{AO2000,NA2007,ETW2010}. 
While adaptive refinement often outperforms uniform refinement, its efficiency is somewhat limited for decreasing error in a QoI as the error contributions may be both positive or negative, and hence there is often significant cancellation of error~\cite{CET+2016}. This is in contrast to reducing error in standard norms which are always positive~\cite{Hols01,BHW00,BCCO09}.  In classical adjoint based adaptivity, the absolute value of the elemental error indicators is taken and the principle of equidistribution applied. By taking the absolute value of the elemental error contributions, the cancellation of error due to opposing signs is lost. This phenomenon, along with a novel refinement strategy based on ``mesoscale'' regions is illustrated for ODEs in \cite{CET+2016}. On the other hand, uniform refinement reduces the error predictably in the asymptotic regime and hence it is expected to reduce
 both the positive and negative elemental contributions equally. Thus, uniform refinement is expected to preserve the cancellation of error and this was observed experimentally in \cite{CET+2016}. Uniform refinement is also more predictable in the expected decrease of error. In this article, we  outline refinement strategies targeting sources of error as well as those based on elemental error indicators.

The main idea behind targeting sources of error to obtain accurate solutions is to  reduce the dominant (in magnitude) source of error $E^r, E^m,  E^\Gamma$ and  $E^{\partial \Omega}$. This is accomplished by refining (either uniformly or adaptively) the corresponding discretization as shown in Table~\ref{tab:dom_disc_refin}.

\begin{table}[!ht]
\centering
\begin{tabular}{c|c}
\toprule
Dominant source & Discretization to refine \\
\midrule
 $E^r$ & Refine $\mathcal{T}$\\
 $E^m$ & Refine $\mathcal{T}_m$\\
 $E^\Gamma$ & Refine simplices containing $\Gamma \cap \Omega_m$\\
 $E^{\partial \Omega}$ & Refine simplices containing $\partial \Omega \cap \Omega$\\
\bottomrule
\end{tabular}
\caption{The discretizations to be refined based on the dominant source of error. The refinement may be uniform or adaptive.}
\label{tab:dom_disc_refin}
\end{table}

\subsubsection{Uniform Contribution Refinement}

In the Uniform Contribution Refinement, we choose the dominant component for refinement if it is at least 3 times larger than the next dominant component, or if both the top two dominant components have the same sign, so that  the cancellation of error is preserved. If this requirement is not satisfied, the scheme defaults to standard uniform refinement.

\subsubsection{Adaptive Contribution Refinement}
The Adaptive Contribution Refinement is similar to the standard algorithms for adjoint weighted adaptive algorithms~\cite{EHL05,barth04,ABC+2011}. E.g., if the aim is to reduce the component $E^r$, then we define an elemental error indicator based on \eqref{eq:err_comps_nonlin} as
\begin{equation}
\eta_T  = |-\langle \epsilon_m 
	\frac{\partial u^s}{\partial \bn} , \phi^r \rangle_{T,\Gamma} - (\epsilon \nabla U^r, \nabla \phi^r)_T - (\okappa^2 \sinh{U}^r, \phi^r)_T - (\epsilon \nabla U^h, \nabla \phi^r)_{T,m} |
\end{equation}
where $T \in \mathcal{T}$, and  the subscripts ${T,\Gamma} $, $T$ and $T,m$ refer to evaluations of the integrals restricted to the element $T$ such that $T \cap \Gamma \neq \phi$, $T \in \Omega$ and $T\cap \Omega_m \neq \phi$ respectively. Once  a per elemental error estimator is defined, the \textit{D\"orfler} scheme is used for marking the elements for refinement~\cite{Do96}. To preserve the cancellation of errors between different sources of error, all sources which have a total error contribution of at least half the dominant error contribution are selected to be adaptively refined.

 \subsubsection{Classical Refinement}
 In the classical adaptive refinement strategy we add up the terms in $E^r, E^m, E^\Gamma, E^{\partial \Omega}$ and $E^\mathrm{neg}$ in Theorem~\ref{tmh:non_pbe_err_rep} so that the error in the QoI for the nonlinear PBE is, $Q(u^h-U^h, u^r-U^r) \equiv E$ is,
\begin{equation}
\label{eq:err_combined}
\begin{aligned}
 E &= -\langle \epsilon_m 
	\frac{\partial u^s}{\partial \bn} , \phi^r \rangle_{\Gamma} - (\epsilon \nabla U^r, \nabla \phi^r) - (\okappa^2 \sinh({U}^r), \phi^r) - (\epsilon \nabla U^h, \nabla \phi^r)_{m} \\
	&-(\epsilon \nabla U^h, \nabla \phi^m)_{m} + (\epsilon \nabla \phi^r, \nabla (U^r_d - u^r_d)) + (\okappa^2 \overline{\alpha} \phi^r, (U^r_d - u^r_d)) \\
	&+(\epsilon \nabla \phi^h, \nabla (U^h_d - u^h_d))_m  + (\epsilon \nabla \phi^r, \nabla (U^h_d - u^h_d))_{m} + (\psi, (u_d^h - U^h_d) +  (u_d^r - U_d^r) )_m
\end{aligned}
\end{equation}
We define  projection operators, $\pi_m: H^1_0(\Omega_m) \rightarrow  V_{h}^m$ and $\pi_r: H^1_0(\Omega) \rightarrow V_{h}$. From \eqref{eq:pbe_3_term_weak_form_2_disc} we have,
\begin{equation}
	\label{eq:galerkin_orthogonality}
	\begin{cases}
	\begin{aligned}
	&(\epsilon \nabla U^h, \nabla \pi_m \phi^m)_m = 0\\
	&(\epsilon \nabla U^r, \nabla \pi_r \phi^r) + (\okappa^2 \sinh(U^r), \pi_r \phi^r) + (\epsilon \nabla U^h, \nabla \pi_r \phi^r)_m = - \langle \epsilon_m 
	\frac{\partial u^s}{\partial \bn} , \pi_r \phi^r \rangle_\Gamma.
	\end{aligned}
	\end{cases}
\end{equation}
Combining \eqref{eq:err_combined} with \eqref{eq:galerkin_orthogonality} leads to the following elemental error indicator for element $T$
\begin{equation}
\label{eq:dwr_indicator}
\begin{aligned}
 \eta_T &= | -\langle \epsilon_m
	\frac{\partial u^s}{\partial \bn} , (\phi^r - \pi_r \phi^r) \rangle_{T, \Gamma} - (\epsilon \nabla U^r, \nabla (\phi^r - \pi_r \phi^r))_T \\
	&- (\okappa^2 \sinh({U}^r), \phi^r - \pi_r \phi^r)_T - (\epsilon \nabla U^h, \nabla (\phi^r - \pi_r \phi^r))_{T,m} \\
	&-(\epsilon \nabla U^h, \nabla (\phi^m - \pi_m \phi^m))_{T,m} + (\epsilon \nabla \phi^r, \nabla (U^r_d - u^r_d))_T + (\okappa^2 \overline{\alpha} \phi^r, (U^r_d - u^r_d))_T \\
	&+(\epsilon \nabla \phi^h, \nabla (U^h_d - u^h_d))_{T,m}  + (\epsilon \nabla \phi^r, \nabla (U^h_d - u^h_d))_{T,m} \\
	&+ (\psi, (u_d^h - U^h_d)_T +  (u_d^r - U_d^r) )_{T,m} |
\end{aligned}
\end{equation}
The elemental error indicator for the linearized PBE is similar except that $\sinh({U}^r)$ is replaced by $U^r$ and $\overline{\alpha}$ by $1$.

\section{Numerical experiments}
\label{sec:num_exps}

We show the accuracy of the {a posteriori} error estimates and utilization of the different sources of error to obtain an accurate computation of the QoI for the Born ion and methanol. The values of the constants in the PBE are chosen as $\epsilon_m=1$, $\epsilon_s = 78$ and $\okappa^2 = 0.918168$ unless otherwise stated.
The value $\okappa^2 = 0.918168$ corresponds to an ionic concentration of 0.1 M. These values reflect typical scenarios for PBE simulations~\cite{BCCO09,CBO12}. 
The initial meshes, defining the domains $\Omega_m$, $\Omega_s$ and the interface $\Gamma$ are generated using GAMer\cite{YHCM08}.
We use the standard space of continuous piecewise linear polynomials for the solution spaces corresponding to $u^m$ and $u^r$, that is for spaces $V_{h}^m$ and $V_{h}$. The spaces for the adjoint solutions $W^h$ and $W^r$ are chosen to be continuous piecewise quadratic polynomials. For ease of implementation, we always ensure that $\mathcal{T}_m = \mathcal{T} \cap \Omega_m$. The QoI \eqref{eq:qoi} requires accurate integration near the points $x_i$. This is achieved by refining the cells near $x_i$ a few times. 
The functions $u^h_d, u^r_d, U^h_d, U^r_d, $ are such that they satisfy the  boundary condition exactly on a boundary vertex  and are zero on the interior vertices. This choice results in the component $E^{\mathrm{neg}}$ being exactly zero which was also verified numerically. 
Experiments are performed for the Born ion and the methanol molecule. The  reference solutions needed for the effectivity ratios are computed using a mesh with 411635 vertices for the Born ion and a mesh with 90264  vertices for the methanol molecule and using continuous piecewise quadratic polynomials for the finite element space. 
The reference values of the QoI for the linearized PBE for the Born ion and methanol are -276.749875 and -48.477443 respectively. The corresponding values for the nonlinear PBE  are -276.825527 and -48.479878.
Since the reference solutions themselves have some error, effectivity ratios are only shown for experiments for which the reference solution is relatively accurate. 
All computations are carried out in the finite element software package DOLFIN from the FEniCS library \cite{LoggWells2010a,LoggWellsEtAl2012a,AlnaesBlechta2015a,LoggMardalEtAl2012a}. The value of $\eta$ for the QoI in \eqref{eq:qoi} was chosen as 0.005.
The \textit{D\"orfler} marking parameter is chosen as 0.2. The projection operators $\pi_m$ and $\pi_r$ were chosen as $L_2$ projectors.

\subsection{Born Ion}

The Born ion consists of a single point charge $Q_1$ in the center of a spherical solute domain $\Omega_m$ of radius $R$~\cite{Kirk34}. The solute is surrounded by a large spherical solvent domain, $\Omega_s$, as depicted in Fig. \ref{fig:born_domains} which has been adopted from \cite{CBO12}.
\begin{figure}[!ht]
\centering
\subfloat[Born ion]{
\label{fig:born_domains}
 \includegraphics[width=.25\textwidth]{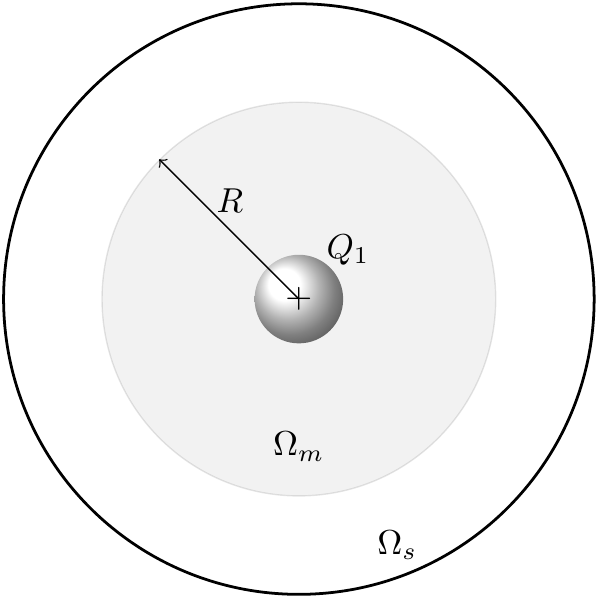}

}
 \hspace{30ex}
 \subfloat[Methanol]{
 \includegraphics[width=.14\textwidth]{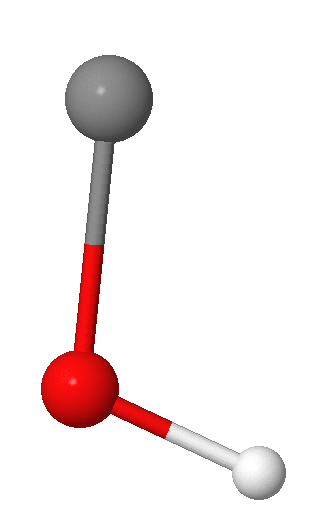}
 \label{fig:methanol} 
 }
\caption{Born ion and Methanol}
\end{figure}
Table~\ref{tab:born_lin_1} shows the error estimate, the effectivity ratio and different sources of error for the linearized PBE for two different meshes: an initial mesh of 6718 vertices and a uniformly refined mesh of 52014 vertices. In both cases, the effectivity ratio is close to one, indicating the accuracy of the error estimate. Moreover, we see that uniform refinement decreases all sources of error while preserving their signs, and hence accounts for cancellation of error. Similar results for the nonlinear PBE are shown in Table~\ref{tab:born_nonlin_1}.

\begin{table}[!ht]
\centering
\begin{tabular}{c|c|c|c|c|c|c}
\toprule
$N$ & Est. Err. & $\gamma_{\rm eff}$ &  $E^r$ & $E^m$ & $E^\Gamma$   & $E^{\partial \Omega}$ \\
\midrule
6718&	-1.14&	  1.05&	2.05e-01&	5.26e-09&	-1.34e+00&	4.86e-04\\
52014&	-0.248&	1.04&	1.19e-01&	2.04e-06&	-3.67e-01&	1.36e-04\\
\bottomrule
\end{tabular}
\caption{Born ion: Error estimate, effectivity ratio and error contributions for the linearized PBE~\eqref{eq:pbe_3_term_weak_form_2} with $N(u) = u$. $N$ is the number of vertices in $\mathcal{T}$. The terms $E^r$, $E^m$, $E^\Gamma$ and $E^{\partial \Omega}$ are defined in Theorem \ref{thm:err_rep_lin_pbe}.  }
\label{tab:born_lin_1}
\end{table}


\begin{table}[!ht]
\centering
\begin{tabular}{c|c|c|c|c|c|c}
\toprule
$N$ & Est. Err. & $\gamma_{\rm eff}$ &  $E^r$ & $E^m$ & $E^\Gamma$   & $E^{\partial \Omega}$ \\
\midrule
6718&	-1.16&	  1.05&	1.88e-01&	5.13e-09&	-1.34e+00&	4.24e-04\\
52014&	-0.254&	1.04&	1.13e-01&	2.69e-06&	-3.67e-01&	1.18e-04\\
\bottomrule
\end{tabular}
\caption{Born ion: Error estimate, effectivity ratio and error contributions for the nonlinear PBE~\eqref{eq:pbe_3_term_weak_form_2} with $N(u) = \sinh(u)$. The terms $E^r$, $E^m$, $E^\Gamma$ and $E^{\partial \Omega}$ are defined in Theorem \ref{tmh:non_pbe_err_rep}.  }
\label{tab:born_nonlin_1}
\end{table}

\subsection{Methanol}
We examine the accuracy of the error estimates in the more challenging setting of a methanol molecule,
obtained from the APBS software package~\cite{BSJ+2001}. The methanol molecule
consists of three charged particles representing charge groups: $\text{CH}_3$ and H with 
positive charges of 0.27 and 0.43 respectively, and an O atom with a negative 
charge of 0.7. The model  is depicted in Fig. \ref{fig:methanol} adopted from~\cite{Chaudhry2011}. The numerical experiments are performed on two meshes: an initial mesh of 11769 vertices and a uniformly refined mesh of 90264 vertices. The results for the linearized and nonlinear PBE are shown in Tables~\ref{tab:meth_lin_1} and \ref{tab:meth_nonlin_1}. The effectivity ratios are again close to $1.0$ and highlight the accuracy and robustness of the error estimate for both cases.

\begin{table}[!ht]
\centering
\begin{tabular}{c|c|c|c|c|c|c}
\toprule
$N$ & Est. Err. & $\gamma_{\rm eff}$ &  $E^r$ & $E^m$ & $E^\Gamma$   & $E^{\partial \Omega}$ \\
\midrule
11769&	-0.924&	  1.03&	4.27e-01&	4.67e-06&	-1.35e+00&	2.43e-06\\
90264&	-0.231&	  1.01&	1.49e-01&	9.34e-06&	-3.81e-01&	6.87e-07\\
\bottomrule
\end{tabular}
\caption{Methanol: Error estimate, effectivity ratio and error contributions for the linearized PBE~\eqref{eq:pbe_3_term_weak_form_2} with $N(u) = u$. }
\label{tab:meth_lin_1}
\end{table}


\begin{table}[!ht]
\centering
\begin{tabular}{c|c|c|c|c|c|c}
\toprule
$N$ & Est. Err. & $\gamma_{\rm eff}$ &  $E^r$ & $E^m$ & $E^\Gamma$   & $E^{\partial \Omega}$ \\
\midrule
11769&	-0.924&	  1.02&	4.27e-01&	3.31e-06&	-1.35e+00&	2.41e-06\\
90264&	-0.232&	  1.01&	1.49e-01&	8.77e-06&	-3.81e-01&	6.81e-07\\
\bottomrule
\end{tabular}
\caption{Methanol: Error estimate, effectivity ratio and error contributions for the nonlinear PBE~\eqref{eq:pbe_3_term_weak_form_2} with $N(u) = \sinh(u)$. }
\label{tab:meth_nonlin_1}
\end{table}

\subsection{Refinement strategies}

We use the different sources of error identified by $E^r, E^m,  E^\Gamma$ and  $E^{\partial \Omega}$  to guide refinement decisions. We first give an example of the effect of refining different discretization components, highlighting the significance of cancellation of error. Finally we present examples based on the Uniform Contribution Refinement, Adaptive Contribution Refinement and Classical Refinement schemes explained in \S \ref{sec:refine_strategies_apost_ideas}.

\subsubsection{Effect of refinement decisions on the QoI error}
Consider the error information in Table \ref{tab:born_lin_1} for the coarse mesh with 6718 vertices and error of $-1.14$. The uniformly refined mesh had 52014 vertices and an error of $-0.248$. On examining the different sources of error, we observe that the dominant error contribution is represented by $E^\Gamma$. Thus, instead of uniformly refining the mesh, we only refine simplices on the interface $\Gamma$. This refinement strategy is carried out by marking simplices in $\Omega_m$ which have one face on the interface, that is, marking simplices $T \in \mathcal{T}$  (and in $\mathcal{T}_m$) such that $T \in \Omega_m$ and $T \cap \Gamma$ is not the empty set. 

The refinement results are shown in Table \ref{tab:err_contrib_example_born}. The ``It.'' indicates the refinement level or iteration, with $0$ indicating the starting coarse mesh. 
After the interface is refined, we arrive at level 1,  corresponding to row having $N = 15541$ vertices. Comparing this to the solution obtained by uniform refinement in Table \ref{tab:born_lin_1}, the error is now slightly less while the number of vertices is only 30\% of the number of vertices of the uniformly refined mesh. This reflects a significant cost savings in obtaining accurate solutions.

At refinement level 1,  once again, $E^\Gamma$  is the dominant component and we again refine the interface to reduce the error to $0.0124$. Now, both $E^r$ and $E^\Gamma$ have the same order of magnitude, but opposite signs. If we still carry on refining the interface to arrive at level 3-$\Gamma$. However, now the error has increased to $0.0621$! This behavior is quite common in numerical simulations, where refining a discretization parameter leads to an increase in the error rather than a reduction. Without the aid of adjoint based estimates, the cause of this increase may be hard to diagnose. The error information at levels 2 and 3 indicate why this increase occurred. The error at level 2 involved cancellation between the terms $E^r$ and $E^\Gamma$. Refining the interface significantly reduced $E^\Gamma$, while having only a marginal effect on $E^r$. Thus, there is less cancellation of error and the error increased to $0.0621$. A better option here is uniform refinement, which preserves the cancellation of error between different contributions~\cite{CET+2016}. The results of applying uniform refinement to Level 2 are shown as level 3-Uniform. The cancellation of error is preserved and the error decreased. Note that both the contribution of $E^\Gamma$ is the same for both levels $3 (\Gamma)$ and $3-$Uniform, while the contribution of $E^r$ only sees a significant decrease at level $3-$Uniform.

\begin{table}[!ht]
\centering
\begin{tabular}{c|c|c|c|c|c|c}
\toprule
It. &$N$ &  Error &  $E^r$ & $E^m$ & $E^\Gamma$   & $E^{\partial \Omega}$ \\
\midrule
0&	6718&	-1.14&	2.05e-01&	5.26e-09&	-1.34e+00&	4.86e-04\\
1&	15541&	-0.214&	1.53e-01&	9.75e-07&	-3.68e-01&	4.78e-04\\
2&	41760&	0.0124&	1.03e-01&	2.30e-07&	-9.07e-02&	4.78e-04\\
3 ($\Gamma$)&	141855&	0.0621&	8.41e-02&	1.38e-07&	-2.25e-02&	4.78e-04\\
3-Uniform&	323084&	0.00853&	3.09e-02&	1.29e-07&	-2.25e-02&	1.35e-04\\
\bottomrule
\end{tabular}
\caption{Born ion:  Error contributions and refinement for the linearized PBE. }
\label{tab:err_contrib_example_born}
\end{table}

\subsubsection{Results for Uniform Contribution Refinement}

The results of the Uniform Contribution Refinement strategy defined in \S \ref{sec:refine_strategies_apost_ideas} for the solution of the linear and nonlinear PBE for the Born ion are shown in Tables~\ref{tab:uni_comp_born_lin} and ~\ref{tab:uni_comp_born_nonlin}, while the results for methanol are shown in Tables~\ref{tab:uni_comp_meth_lin} and \ref{tab:uni_comp_meth_nonlin}. Comparing these results to Tables~\ref{tab:born_lin_1}, \ref{tab:born_nonlin_1}, \ref{tab:meth_lin_1} and \ref{tab:meth_nonlin_1}, we observe that the Uniform Contribution Refinement achieves significantly more accurate solutions with a lower computational cost (as measured by the number of vertices in the mesh) for both Born ion and methanol. An interesting observation is at level 2 of Table~\ref{tab:uni_comp_born_nonlin} where $E^r$ and $E^\Gamma$ have almost the same magnitude but opposite signs. These two sources of error cancel, leading to an unexpectedly low error.

\begin{table}[!ht]
\centering
\begin{tabular}{c|c|c|c|c|c|c|c}
\toprule
It. &$N$ & Est. Err. & $\gamma_{\rm eff}$ &  $E^r$ & $E^m$ & $E^\Gamma$   & $E^{\partial \Omega}$ \\
\midrule
0&	6718&	-1.14&	1.05&	2.05e-01&	5.26e-09&	-1.34e+00&	4.86e-04\\
1&	15541&	-0.214&	1.06&	1.53e-01&	9.75e-07&	-3.68e-01&	4.78e-04\\
2&	41760&	0.0124&	--&	1.03e-01&	2.30e-07&	-9.07e-02&	4.78e-04\\
\bottomrule
\end{tabular}
\caption{Uniform Contribution Refinement strategy defined in \S \ref{sec:refine_strategies_apost_ideas} applied to the linearized PBE for the Born ion. It. refers to the refinement iteration or level.}
\label{tab:uni_comp_born_lin}
\end{table}

\begin{table}[!ht]
\centering
\begin{tabular}{c|c|c|c|c|c|c|c}
\toprule
It. &$N$ & Est. Err. & $\gamma_{\rm eff}$ &  $E^r$ & $E^m$ & $E^\Gamma$   & $E^{\partial \Omega}$ \\
\midrule
0&	6718&	-1.16&	1.05&	1.88e-01&	5.13e-09&	-1.34e+00&	4.29e-04\\
1&	15541&	-0.224&	1.05&	1.44e-01&	5.23e-07&	-3.68e-01&	4.17e-04\\
2&	41760&	3.49e-3&	- &	9.38e-02&	4.86e-07&	-9.07e-02&	4.16e-04\\
\bottomrule
\end{tabular}
\caption{Uniform Contribution Refinement strategy defined in \S \ref{sec:refine_strategies_apost_ideas} applied to the nonlinear PBE for the Born ion. }
\label{tab:uni_comp_born_nonlin}
\end{table}

\begin{table}[!ht]
\centering
\begin{tabular}{c|c|c|c|c|c|c|c}
\toprule
It. &$N$ & Est. Err. & $\gamma_{\rm eff}$ &  $E^r$ & $E^m$ & $E^\Gamma$   & $E^{\partial \Omega}$ \\
\midrule
0&	11769&	-0.924&	1.03&	4.27e-01&	4.67e-06&	-1.35e+00&	2.43e-06\\
1&	20283&	-0.275&	1.02&	1.06e-01&	3.07e-07&	-3.82e-01&	2.40e-06\\
2&	46212&	-0.094&	0.998&	1.20e-03&	7.72e-07&	-9.52e-02&	2.40e-06\\
\bottomrule
\end{tabular}
\caption{Uniform Contribution Refinement strategy defined in \S \ref{sec:refine_strategies_apost_ideas} applied to the linearized PBE for the Methanol. }
\label{tab:uni_comp_meth_lin}

\end{table}

\begin{table}[!ht]
\centering
\begin{tabular}{c|c|c|c|c|c|c|c}
\toprule
It. &$N$ & Est. Err. & $\gamma_{\rm eff}$ &  $E^r$ & $E^m$ & $E^\Gamma$   & $E^{\partial \Omega}$ \\
\midrule
0&	11769&	-0.924&	1.03&	4.27e-01&	3.31e-06&	-1.35e+00&	2.41e-06\\
1&	20283&	-0.276&	1.02&	1.06e-01&	8.55e-07&	-3.82e-01&	2.38e-06\\
2&	46212&	-0.0944&	0.996&	8.65e-04&	4.38e-07&	-9.52e-02&	2.38e-06\\
\bottomrule
\end{tabular}
\caption{Uniform Contribution Refinement strategy defined in \S \ref{sec:refine_strategies_apost_ideas} applied to the nonlinear PBE for Methanol. }
\label{tab:uni_comp_meth_nonlin}
\end{table}

\subsubsection{Results for Adaptive Contribution Refinement}
The results of the Adaptive Contribution Refinement strategy defined in \S \ref{sec:refine_strategies_apost_ideas} for the solution of the linear and nonlinear PBE for the Born ion are shown in Tables~\ref{tab:adapt_comp_born_lin} and \ref{tab:adapt_comp_born_nonlin}, while the results for methanol are shown in Tables~\ref{tab:adapt_comp_meth_lin} and \ref{tab:adapt_comp_meth_nonlin}. Comparing these results to Tables~\ref{tab:born_lin_1}, \ref{tab:born_nonlin_1},  \ref{tab:meth_lin_1} and \ref{tab:meth_nonlin_1}, we observe that the Adaptive Contribution Refinement is almost an order of magnitude more accurate for a uniformly refined mesh having the same number of vertices.
 Adaptive Contribution Refinement also outperforms the Uniform Contribution Refinement strategy  for relatively small values of $N$. A couple of interesting observations are in order. In Table~\ref{tab:adapt_comp_born_nonlin} the error decreases up to level 6 after which loss of error cancellation  leads to an increase on level 7. In Table~\ref{tab:adapt_comp_meth_nonlin} we have an unexpectedly low error due to the cancellation between the terms $E^r$ and $E^\Gamma$.

\begin{table}[!ht]
\centering
\begin{tabular}{c|c|c|c|c|c|c|c}
\toprule
It. &$N$ & Est. Err. & $\gamma_{\rm eff}$ &  $E^r$ & $E^m$ & $E^\Gamma$   & $E^{\partial \Omega}$ \\
\midrule
0&	6718&	-1.14&	1.05&	2.05e-01&	5.26e-09&	-1.34e+00&	4.86e-04\\
1&	9476&	-1.07&	1.03&	1.50e-01&	1.07e-07&	-1.22e+00&	4.79e-04\\
2&	14273&	-0.521&	1.02&	1.24e-01&	3.08e-07&	-6.45e-01&	4.78e-04\\
3&	21293&	-0.235&	1.02&	1.07e-01&	3.88e-07&	-3.42e-01&	4.78e-04\\
4&	33320&	-0.0895&	1.02&	9.68e-02&	2.72e-07&	-1.87e-01&	4.78e-04\\
5&	60908&	-0.00803&	0.98&	8.22e-02&	2.61e-07&	-9.07e-02&	4.77e-04\\
6&	112597&	0.0112&	1.13&	5.90e-02&	1.75e-07&	-4.83e-02&	4.77e-04\\
7&	206897&	0.0183&	1.11&	4.31e-02&	2.87e-07&	-2.53e-02&	4.75e-04\\
\bottomrule
\end{tabular}
\caption{Adaptive Contribution Refinement strategy defined in \S \ref{sec:refine_strategies_apost_ideas} applied to the linearized PBE for the Born ion. }
\label{tab:adapt_comp_born_lin}
\end{table}

\begin{table}[!ht]
\centering
\begin{tabular}{c|c|c|c|c|c|c|c}
\toprule
It. &$N$ & Est. Err. & $\gamma_{\rm eff}$ &  $E^r$ & $E^m$ & $E^\Gamma$   & $E^{\partial \Omega}$ \\
\midrule
0&	6718&	-1.16&	1.05&	1.88e-01&	5.13e-09&	-1.34e+00&	4.29e-04\\
1&	9476&	-1.08&	1.02&	1.38e-01&	8.05e-08&	-1.22e+00&	4.19e-04\\
2&	14273&	-0.531&	1.02&	1.14e-01&	4.24e-07&	-6.45e-01&	4.17e-04\\
3&	21293&	-0.244&	1.02&	9.77e-02&	7.15e-08&	-3.42e-01&	4.16e-04\\
4&	33320&	-0.0983&	1.01&	8.80e-02&	3.05e-07&	-1.87e-01&	4.16e-04\\
5&	54112&	-0.021&	0.98&	7.94e-02&	-7.17e-08&	-1.01e-01&	4.16e-04\\
6&	106120&	-0.00159&	-&	4.96e-02&	3.95e-08&	-5.16e-02&	4.13e-04\\
7&	200323&	0.0109&	1.22&	3.66e-02&	1.31e-07&	-2.61e-02&	4.10e-04\\
\bottomrule
\end{tabular}
\caption{Adaptive Contribution Refinement strategy defined in \S \ref{sec:refine_strategies_apost_ideas} applied to the nonlinear PBE for the Born ion. }
\label{tab:adapt_comp_born_nonlin}
\end{table}

\begin{table}[!ht]
\centering
\begin{tabular}{c|c|c|c|c|c|c|c}
\toprule
It. &$N$ & Est. Err. & $\gamma_{\rm eff}$ &  $E^r$ & $E^m$ & $E^\Gamma$   & $E^{\partial \Omega}$ \\
\midrule
0&	11769&	-0.924&	1.03&	4.27e-01&	4.67e-06&	-1.35e+00&	2.43e-06\\
1&	12380&	-0.564&	1.04&	1.98e-01&	1.55e-06&	-7.62e-01&	2.42e-06\\
2&	13852&	-0.391&	1.03&	8.72e-02&	1.56e-06&	-4.78e-01&	2.41e-06\\
3&	17353&	-0.275&	1.02&	1.21e-02&	-1.17e-06&	-2.87e-01&	2.40e-06\\
4&	22732&	-0.206&	1.01&	-2.94e-02&	-1.81e-06&	-1.77e-01&	2.40e-06\\
5&	33019&	-0.156&	0.99&	-5.16e-02&	-5.96e-07&	-1.04e-01&	2.40e-06\\
6&	50784&	-0.127&	0.98&	-6.92e-02&	-1.08e-06&	-5.80e-02&	2.40e-06\\
7&	86224&	0.000197&	-&	3.48e-02&	2.20e-07&	-3.46e-02&	2.40e-06\\
\bottomrule
\end{tabular}
\caption{Adaptive Contribution Refinement strategy defined in \S \ref{sec:refine_strategies_apost_ideas} applied to the linearized PBE for Methanol. }
\label{tab:adapt_comp_meth_lin}
\end{table}

\begin{table}[!ht]
\centering
\begin{tabular}{c|c|c|c|c|c|c|c}
\toprule
It. &$N$ & Est. Err. & $\gamma_{\rm eff}$ &  $E^r$ & $E^m$ & $E^\Gamma$   & $E^{\partial \Omega}$ \\
\midrule
0&	11769&	-0.924&	1.03&	4.27e-01&	3.31e-06&	-1.35e+00&	2.41e-06\\
1&	12380&	-0.564&	1.04&	1.97e-01&	8.87e-07&	-7.62e-01&	2.40e-06\\
2&	13852&	-0.392&	1.03&	8.68e-02&	4.96e-06&	-4.78e-01&	2.39e-06\\
3&	17353&	-0.275&	1.02&	1.17e-02&	-1.42e-06&	-2.87e-01&	2.38e-06\\
4&	22732&	-0.206&	   1.00&	-2.97e-02&	-1.47e-06&	-1.77e-01&	2.38e-06\\
5&	33036&	-0.156&	0.99&	-5.19e-02&	-8.73e-07&	-1.04e-01&	2.38e-06\\
6&	50796&	-0.127&	0.98&	-6.95e-02&	-1.01e-06&	-5.80e-02&	2.38e-06\\
7&	86276&	1.67e-05&	-&	3.46e-02&	-8.72e-07&	-3.45e-02&	2.38e-06\\
\bottomrule
\end{tabular}
\caption{Adaptive Contribution Refinement strategy defined in \S \ref{sec:refine_strategies_apost_ideas} applied to the nonlinear PBE for methanol. }
\label{tab:adapt_comp_meth_nonlin}
\end{table}

\subsubsection{Results for Classical Refinement}

The results of the Classical Refinement strategy defined in \S \ref{sec:refine_strategies_apost_ideas} for the solution of the linearized and nonlinear PBE for the Born ion are shown in Tables~\ref{tab:dwr_born_lin} and \ref{tab:dwr_born_nonlin}, while the results for methanol are shown in Tables~\ref{tab:dwr_meth_lin} and \ref{tab:dwr_meth_nonlin}. Comparing these results to Tables~\ref{tab:born_lin_1}, \ref{tab:born_nonlin_1}, \ref{tab:meth_lin_1} and \ref{tab:meth_nonlin_1}, we observe that 
Classical refinement also performs well compared to uniform refinement. However, its performance is slightly worse than Adaptive Contribution Refinement as illustrated by Tables~\ref{tab:adapt_comp_born_lin} and \ref{tab:dwr_born_lin}. In fact, the error at level 7 in table \ref{tab:dwr_born_lin} shows almost a doubling of error at level 6. This is explained by observing the behavior of the terms $E^r$ and $E^\Gamma$, which are the two dominant sources of error, at these levels. Although both terms decrease in magnitude, there is less cancellation of error, leading to an overall increase. A similar increase in the error is observed at level 7 of Table~\ref{tab:dwr_born_nonlin}.

\begin{table}[!ht]
\centering
\begin{tabular}{c|c|c|c|c|c|c|c}
\toprule
It. &$N$ & Est. Err. & $\gamma_{\rm eff}$ &  $E^r$ & $E^m$ & $E^\Gamma$   & $E^{\partial \Omega}$ \\
\midrule
0&	6718&	-1.14&	1.05&	2.05e-01&	5.26e-09&	-1.34e+00&	4.86e-04\\
1&	9481&	-1.07&	1.03&	1.50e-01&	5.62e-08&	-1.22e+00&	4.79e-04\\
2&	14299&	-0.512&	1.02&	1.24e-01&	-4.16e-08&	-6.36e-01&	4.78e-04\\
3&	21638&	-0.23&	1.02&	1.06e-01&	2.82e-07&	-3.37e-01&	4.78e-04\\
4&	33959&	-0.0868&	1.02&	9.58e-02&	1.83e-07&	-1.83e-01&	4.78e-04\\
5&	55762&	-0.0116&	   1.00&	8.68e-02&	9.50e-08&	-9.89e-02&	4.78e-04\\
6&	95162&	0.0275&	1.03&	8.10e-02&	1.28e-08&	-5.40e-02&	4.78e-04\\
7&	165202&	0.0487&	1.02&	7.75e-02&	9.00e-08&	-2.94e-02&	4.78e-04\\
\bottomrule
\end{tabular}
\caption{Classical Refinement strategy defined in \S \ref{sec:refine_strategies_apost_ideas} applied to the linearized PBE for the Born ion. }
\label{tab:dwr_born_lin}
\end{table}

\begin{table}[!ht]
\centering
\begin{tabular}{c|c|c|c|c|c|c|c}
\toprule
It. &$N$ & Est. Err. & $\gamma_{\rm eff}$ &  $E^r$ & $E^m$ & $E^\Gamma$   & $E^{\partial \Omega}$ \\
\midrule
0&	6718&	-1.16&	1.05&	1.88e-01&	5.13e-09&	-1.34e+00&	4.29e-04\\
1&	9481&	-1.08&	1.02&	1.38e-01&	1.06e-07&	-1.22e+00&	4.19e-04\\
2&	14299&	-0.521&	1.02&	1.14e-01&	-3.29e-08&	-6.36e-01&	4.17e-04\\
3&	21638&	-0.239&	1.02&	9.70e-02&	1.84e-07&	-3.37e-01&	4.16e-04\\
4&	33959&	-0.0955&	1.01&	8.71e-02&	1.99e-07&	-1.83e-01&	4.16e-04\\
5&	55762&	-0.02&	0.98&	7.85e-02&	1.07e-07&	-9.89e-02&	4.16e-04\\
6&	95162&	0.0192&	1.06&	7.28e-02&	2.90e-07&	-5.40e-02&	4.16e-04\\
7&	165192&	0.0404&	1.04&	6.93e-02&	4.39e-08&	-2.94e-02&	4.16e-04\\
\bottomrule
\end{tabular}
\caption{Classical Refinement strategy defined in \S \ref{sec:refine_strategies_apost_ideas} applied to the nonlinear PBE for the Born ion. }
\label{tab:dwr_born_nonlin}
\end{table}

\begin{table}[!ht]
\centering
\begin{tabular}{c|c|c|c|c|c|c|c}
\toprule
It. &$N$ & Est. Err. & $\gamma_{\rm eff}$ &  $E^r$ & $E^m$ & $E^\Gamma$   & $E^{\partial \Omega}$ \\
\midrule
0&	11769&	-0.924&	1.03&	4.27e-01&	4.67e-06&	-1.35e+00&	2.43e-06\\
1&	12637&	-0.613&	1.03&	1.25e-01&	7.97e-07&	-7.38e-01&	2.41e-06\\
2&	14798&	-0.352&	1.02&	5.26e-02&	4.42e-07&	-4.04e-01&	2.40e-06\\
3&	20782&	-0.267&	1.01&	-5.43e-02&	-1.61e-06&	-2.13e-01&	2.40e-06\\
4&	34556&	-0.131&	0.99&	-3.32e-02&	6.86e-08&	-9.74e-02&	2.40e-06\\
5&	63134&	-0.0388&	0.94&	1.25e-02&	-2.33e-07&	-5.12e-02&	2.40e-06\\
6&	118378&	-0.0098&	-&	1.77e-02&	-1.35e-07&	-2.75e-02&	2.40e-06\\
\bottomrule
\end{tabular}

\caption{Classical Refinement strategy defined in \S \ref{sec:refine_strategies_apost_ideas} applied to the linearized PBE for Methanol. }
\label{tab:dwr_meth_lin}
\end{table}

\begin{table}[!ht]
\centering
\begin{tabular}{c|c|c|c|c|c|c|c}
\toprule
It. &$N$ & Est. Err. & $\gamma_{\rm eff}$ &  $E^r$ & $E^m$ & $E^\Gamma$   & $E^{\partial \Omega}$ \\
\midrule
0&	11769&	-0.924&	1.03&	4.27e-01&	3.31e-06&	-1.35e+00&	2.41e-06\\
1&	12637&	-0.613&	1.03&	1.24e-01&	2.05e-06&	-7.38e-01&	2.40e-06\\
2&	14798&	-0.352&	1.02&	5.22e-02&	-1.86e-07&	-4.04e-01&	2.38e-06\\
3&	20782&	-0.267&	1.01&	-5.47e-02&	-3.94e-07&	-2.13e-01&	2.38e-06\\
4&	34538&	-0.131&	0.98&	-3.35e-02&	-3.19e-07&	-9.75e-02&	2.38e-06\\
5&	63070&	-0.0391&	0.94&	1.22e-02&	1.45e-07&	-5.13e-02&	2.38e-06\\
6&	118208&	-0.01&	-&	1.75e-02&	5.36e-08&	-2.75e-02&	2.38e-06\\
\bottomrule
\end{tabular}
\caption{Classical Refinement strategy defined in \S \ref{sec:refine_strategies_apost_ideas} applied to the nonlinear PBE for methanol. }
\label{tab:dwr_meth_nonlin}
\end{table}

\subsubsection{Experiment illustrating difference between linear and nonlinear PBE results}
\label{sec:born_spiked}

In this section we perform an experiment to illustrate the difference in the results of the  linear and nonlinear PBE solutions. To this end, we again choose the Born ion but now the charge on the ion, $Q_1$, is taken to be ten times its value in earlier experiments and also set $\okappa^2 = 9.18168$ which is also ten times larger than in earlier experiments. We call this setup  the highly charged Born ion. The difference in the computed QoI between the linear and nonlinear PBE for a mesh of 6718 vertices was approximately 59 units. The results for the different adaptive strategies also indicate different behavior between the linearized and nonlinear PBE.



The results for the linear and nonlinear PBE using
Uniform Contribution refinement, Adaptive Contribution refinement and Classical Refinement are shown in Tables 
\ref{tab:UCR_spiked_born_lin}, \ref{tab:UCR_spiked_born_nonlin}, \ref{tab:ACR_spiked_born_lin}, \ref{tab:ACR_spiked_born_nonlin}, \ref{tab:dwr_spiked_born_lin}, \ref{tab:dwr_spiked_born_nonlin}. The results indicate that the Adaptive Contribution Refinement performs better than Classical Refinement for the linearized PBE, while they both perform equally well for the nonlinear PBE. Uniform Contribution Refinement outperforms both Classical Refinement and Adaptive Contribution Refinement.




\begin{table}[!ht]
\centering
\begin{tabular}{c|c|c|c|c|c|c|c}
\toprule
It. &$N$ & Est. Err. & $\gamma_{\rm eff}$ &  $E^r$ & $E^m$ & $E^\Gamma$   & $E^{\partial \Omega}$ \\
\midrule
0&	6718&	-128&	1.04&	6.81e+00&	1.92e-07&	-1.35e+02&	3.76e-14\\
1&	15541&	-28.2&	1.04&	8.51e+00&	7.96e-05&	-3.68e+01&	4.10e-15\\
2&	41760&	-5.12&	1.01&	3.95e+00&	1.94e-05&	-9.07e+00&	3.14e-15\\
3&	141855&	-0.0159&	-&	2.24e+00&	1.66e-05&	-2.25e+00&	3.12e-15\\
\bottomrule
\end{tabular}
\caption{Uniform Contribution Refinement strategy applied to linearized PBE for the setup described in \S \ref{sec:born_spiked}.}
\label{tab:UCR_spiked_born_lin}
\end{table}

\begin{table}[!ht]
\centering
\begin{tabular}{c|c|c|c|c|c|c|c}
\toprule
It. &$N$ & Est. Err. & $\gamma_{\rm eff}$ &  $E^r$ & $E^m$ & $E^\Gamma$   & $E^{\partial \Omega}$ \\
\midrule
0&	6718&	-131&	1.04&	3.72e+00&	3.27e-07&	-1.35e+02&	2.25e-15\\
1&	15541&	-29.4&	1.04&	7.36e+00&	7.20e-05&	-3.68e+01&	2.46e-16\\
2&	41760&	-6.24&	1.01&	2.83e+00&	2.36e-05&	-9.07e+00&	1.88e-16\\
3&	141855&	-1.18&	-&	1.07e+00&	-5.90e-07&	-2.25e+00&	1.83e-16\\
\bottomrule
\end{tabular}
\caption{Uniform Contribution Refinement strategy applied to nonlinear PBE for the setup described in \S \ref{sec:born_spiked}.}
\label{tab:UCR_spiked_born_nonlin}
\end{table}

\begin{table}[!ht]
\centering
\begin{tabular}{c|c|c|c|c|c|c|c}
\toprule
It. &$N$ & Est. Err. & $\gamma_{\rm eff}$ &  $E^r$ & $E^m$ & $E^\Gamma$   & $E^{\partial \Omega}$ \\
\midrule
0&	6718&	-128&	1.04&	6.81e+00&	1.92e-07&	-1.35e+02&	3.76e-14\\
1&	9458&	-116&	1.02&	6.51e+00&	9.61e-06&	-1.22e+02&	8.52e-15\\
2&	14164&	-59.5&	1.02&	5.72e+00&	7.12e-05&	-6.53e+01&	4.42e-15\\
3&	21205&	-29.9&	1.01&	4.49e+00&	-2.03e-05&	-3.44e+01&	3.30e-15\\
4&	33141&	-15.3&	1.01&	3.55e+00&	2.44e-05&	-1.89e+01&	3.29e-15\\
5&	53605&	-7.32&	0.99&	2.85e+00&	-4.94e-06&	-1.02e+01&	3.28e-15\\
6&	91792&	-3.3&	0.95&	2.30e+00&	2.61e-05&	-5.59e+00&	3.15e-15\\
7&	160005&	-1.12&	-&	1.92e+00&	1.41e-05&	-3.04e+00&	3.15e-15\\
\bottomrule
\end{tabular}
\caption{Adaptive Contribution Refinement strategy applied to linearized PBE for the setup described in \S \ref{sec:born_spiked}.}
\label{tab:ACR_spiked_born_lin}
\end{table}

\begin{table}[!ht]
\centering
\begin{tabular}{c|c|c|c|c|c|c|c}
\toprule
It. &$N$ & Est. Err. & $\gamma_{\rm eff}$ &  $E^r$ & $E^m$ & $E^\Gamma$   & $E^{\partial \Omega}$ \\
\midrule
0&	6718&	-131&	1.04&	3.72e+00&	3.27e-07&	-1.35e+02&	2.25e-15\\
1&	9458&	-118&	1.02&	4.23e+00&	6.37e-06&	-1.22e+02&	5.13e-16\\
2&	14164&	-60.8&	1.02&	4.51e+00&	1.48e-05&	-6.53e+01&	2.68e-16\\
3&	21206&	-30.9&	1.01&	3.59e+00&	2.14e-06&	-3.45e+01&	1.98e-16\\
4&	33216&	-16.2&	   1.00&	2.63e+00&	2.77e-05&	-1.88e+01&	1.97e-16\\
5&	53748&	-8.24&	0.98&	1.90e+00&	-3.05e-06&	-1.01e+01&	1.98e-16\\
6&	92219&	-4.27&	0.95&	1.30e+00&	2.44e-06&	-5.56e+00&	1.88e-16\\
7&	160740&	-2.13&	0.90&	8.86e-01&	-2.27e-06&	-3.02e+00&	1.87e-16\\
\bottomrule
\end{tabular}
\caption{Adaptive Contribution Refinement strategy applied to nonlinear PBE for the setup described in \S \ref{sec:born_spiked}.}
\label{tab:ACR_spiked_born_nonlin}
\end{table}

\begin{table}[!ht]
\centering
\begin{tabular}{c|c|c|c|c|c|c|c}
\toprule
It. &$N$ & Est. Err. & $\gamma_{\rm eff}$ &  $E^r$ & $E^m$ & $E^\Gamma$   & $E^{\partial \Omega}$ \\
\midrule
0&	6718&	-128&	1.04&	6.81e+00&	1.92e-07&	-1.35e+02&	3.76e-14\\
1&	9481&	-116&	1.02&	6.51e+00&	9.62e-06&	-1.22e+02&	8.52e-15\\
2&	14299&	-57.9&	1.02&	5.69e+00&	7.71e-05&	-6.36e+01&	4.10e-15\\
3&	21638&	-29.2&	1.01&	4.46e+00&	1.38e-05&	-3.37e+01&	3.60e-15\\
4&	33952&	-14.8&	1.01&	3.49e+00&	1.23e-05&	-1.83e+01&	3.58e-15\\
5&	55747&	-7.09&	0.98&	2.80e+00&	2.70e-05&	-9.89e+00&	3.03e-15\\
6&	95115&	-3.16&	0.95&	2.25e+00&	1.60e-05&	-5.40e+00&	2.91e-15\\
\bottomrule
\end{tabular}
\caption{Classical Refinement strategy applied to linearized PBE for the setup described in \S \ref{sec:born_spiked}.}
\label{tab:dwr_spiked_born_lin}
\end{table}

\begin{table}[!ht]
\centering
\begin{tabular}{c|c|c|c|c|c|c|c}
\toprule
It. &$N$ & Est. Err. & $\gamma_{\rm eff}$ &  $E^r$ & $E^m$ & $E^\Gamma$   & $E^{\partial \Omega}$ \\
\midrule
0&	6718&	-131&	1.04&	3.72e+00&	3.27e-07&	-1.35e+02&	2.25e-15\\
1&	9477&	-118&	1.02&	4.23e+00&	8.49e-06&	-1.22e+02&	5.12e-16\\
2&	14286&	-59.8&	1.02&	4.49e+00&	1.86e-05&	-6.43e+01&	2.47e-16\\
3&	21365&	-30.4&	1.01&	3.61e+00&	2.82e-05&	-3.40e+01&	2.21e-16\\
4&	33470&	 -16&	   1.00&	2.60e+00&	1.84e-05&	-1.86e+01&	2.18e-16\\
5&	54450&	-8.16&	0.98&	1.88e+00&	6.44e-07&	-1.00e+01&	1.85e-16\\
6&	93369&	-4.22&	0.95&	1.28e+00&	-5.89e-07&	-5.50e+00&	1.77e-16\\
\bottomrule
\end{tabular}
\caption{Classical Refinement strategy applied to nonlinear PBE for the setup described in \S \ref{sec:born_spiked}.}
\label{tab:dwr_spiked_born_nonlin}
\end{table}


\section{Conclusions}
\label{sec:conclusions}
Computing a QoI from the numerical solution of the PBE often has significant error that needs to be quantified. In this article, we develop adjoint based error estimates for this purpose. The adjoint operators are defined by accounting for the coupled nature of the three term split PBE as well as the issues arising due to the regularity of the normal derivative. The resulting error estimates are shown to be accurate, with effectivity ratios close to one. The error is partitioned in such a way that specific sources of error are identified and addressed. Moreover, novel refinement schemes, called Uniform Contribution Refinement and Adaptive Contribution Refinement in this article, utilize the information about the sources of error to arrive at accurate computed values of the QoI.

The effects of interface geometry on the error is an interesting area of future research. The current article is based on the the standard assumption in the  PBE literature that the tessellated geometric representation of the interface is the true interface, e.g. as in references~\cite{ABC+2011,BHW00,CHX07}. The effect of the geometry, which could be considered a ``model form error'', is an interesting topic to explore and the author intends to pursue it in future.

\clearpage

\bibliographystyle{plain}
\bibliography{pbe_apost}

\begin{thebibliography}{10}

\bibitem{AO2000}
M.~Ainsworth and T.~Oden.
\newblock {\em \emph{A posteriori} error estimation in finite element
  analysis}.
\newblock John Wiley-Teubner, 2000.

\bibitem{ABC+2011}
Burak Aksoylu, Stephen~D. Bond, Eric~C. Cyr, and Michael Holst.
\newblock Goal-oriented adaptivity and multilevel preconditioning for the
  {P}oisson-{B}oltzmann equation.
\newblock {\em Journal of Scientific Computing}, 52(1):202--225, Oct 2011.

\bibitem{AlnaesBlechta2015a}
Martin~S. Aln{\ae}s, Jan Blechta, Johan Hake, August Johansson, Benjamin
  Kehlet, Anders Logg, Chris Richardson, Johannes Ring, Marie~E. Rognes, and
  Garth~N. Wells.
\newblock The {FEniCS} project version 1.5.
\newblock {\em Archive of Numerical Software}, 3(100), 2015.

\bibitem{BBC06}
N.~A. Baker, D.~Bashford, and D.~A. Case.
\newblock Implicit solvent electrostatics in biomolecular simulation.
\newblock In Benedict Leimkuhler, Christophe Chipot, Ron Elber, Aatto
  Laaksonen, Alan Mark, Tamar Schlick, Christof Schutte, and Robert Skeel,
  editors, {\em New Algorithms for Macromolecular Simulation}, volume~49 of
  {\em Lecture Notes in Computational Science and Engineering}, pages 263--295.
  Springer-Verlag, 2006.

\bibitem{BHW00}
N.~A. Baker, M.~J. Holst, and F.~Wang.
\newblock Adaptive multilevel finite element solution of the
  {P}oisson-{B}oltzmann equation {II}: Refinement at solvent accessible
  surfaces in biomolecular systems.
\newblock {\em J. Comput.\ Chem.}, 21:1343--1352, 2000.

\bibitem{BSJ+2001}
N.~A. Baker, D.~Sept, S.~Joseph, M.~J. Holst, and J.~A. McCammon.
\newblock Electrostatics of nanosystems: Application to microtubules and the
  ribosome.
\newblock {\em Proceedings of the National Academy of Sciences},
  98(18):10037--10041, Aug 2001.

\bibitem{BangerthRannacher}
W.~Bangerth and R.~Rannacher.
\newblock {\em Adaptive Finite Element Methods for Differential Equations}.
\newblock Birkhauser Verlag, 2003.

\bibitem{barth04}
T.~J. Barth.
\newblock {\em \emph{A posteriori} Error Estimation and Mesh Adaptivity for
  Finite Volume and Finite Element Methods}, volume~41 of {\em Lecture Notes in
  Computational Science and Engineering}.
\newblock Springer, New York, 2004.

\bibitem{Bashford97}
D.~Bashford.
\newblock An object-oriented programming suite for electrostatic effects in
  biological molecules. {A}n experience report on the {MEAD} project.
\newblock In {\em Scientific Computing in Object-Oriented Parallel
  Environments}, volume 1343 of {\em Lecture Notes in Computer Science}, pages
  233--240, 1997.

\bibitem{beckerrannacher}
R.~Becker and R.~Rannacher.
\newblock An optimal control approach to \emph{a posteriori} error estimation
  in finite element methods.
\newblock {\em Acta Numerica}, pages 1--102, 2001.

\bibitem{BWSH95}
Ranganathan Bharadwaj, Andreas Windemuth, S.~Sridharan, Barry Honig, and
  Anthony Nicholls.
\newblock The fast multipole boundary element method for molecular
  electrostatics: An optimal approach for large systems.
\newblock {\em J. Comput.\ Chem.}, 16(7):898--913, 1995.

\bibitem{BCCO09}
Stephen~D. Bond, Jehanzeb~Hameed Chaudhry, Eric~C. Cyr, and Luke~N. Olson.
\newblock A first-order system least-squares finite element method for the
  {P}oisson-{B}oltzmann equation.
\newblock {\em J.~Comput.~Chem.}, 31(8):1625--1635, 2010.

\bibitem{YCLP2004}
Yang Cao and Linda Petzold.
\newblock A posteriori error estimation and global error control for ordinary
  differential equations by the adjoint method.
\newblock {\em SIAM Journal on Scientific Computing}, 26(2):359--374, 2004.

\bibitem{CET-09}
V.~Carey, D.~Estep, and S.J. Tavener.
\newblock A posteriori analysis and adaptive error control for multiscale
  operator decomposition solution of elliptic systems {I}: One way coupled
  systems.
\newblock {\em SIAM Journal on Numerical Analysis}, 47(1):740--761, 2009.

\bibitem{Ch1913}
D.~L. Chapman.
\newblock A contribution to the theory of electrocapillarity.
\newblock {\em Philos. Mag.}, 25:475--481, 1913.

\bibitem{CEG+2015}
Jehanzeb~H Chaudhry, Donald Estep, Victor Ginting, John~N Shadid, and Simon
  Tavener.
\newblock A posteriori error analysis of imex multi-step time integration
  methods for advection--diffusion--reaction equations.
\newblock {\em Computer Methods in Applied Mechanics and Engineering},
  285:730--751, 2015.

\bibitem{Chaudhry2011}
Jehanzeb~Hameed Chaudhry, Stephen~D. Bond, and Luke~N. Olson.
\newblock Finite element approximation to a finite-size modified
  {P}oisson-{B}oltzmann equation.
\newblock {\em Journal of Scientific Computing}, 47(3):347--364, 2011.

\bibitem{CBO12}
Jehanzeb~Hameed Chaudhry, Stephen~D. Bond, and Luke~N. Olson.
\newblock A weighted adaptive least-squares finite element method for the
  {P}oisson-{B}oltzmann equation.
\newblock {\em Applied Mathematics and Computation}, 218(9):4892 -- 4902, 2012.

\bibitem{CET+2016}
Jehanzeb~Hameed Chaudhry, Don Estep, Simon Tavener, Varis Carey, and Jeff
  Sandelin.
\newblock A posteriori error analysis of two-stage computation methods with
  application to efficient discretization and the {P}arareal algorithm.
\newblock {\em SIAM Journal on Numerical Analysis}, 54(5):2974--3002, 2016.

\bibitem{CHX07}
Long Chen, Michael~J. Holst, and Jinchao Xu.
\newblock The finite element approximation of the nonlinear
  {P}oisson-{B}oltzmann equation.
\newblock {\em SIAM J. Numer. Anal.}, 45(6):2298--2320, 2007.

\bibitem{WYQ05}
Wenbin Chen, Yifan Shen, and Qing Xia.
\newblock A mortar finite element approximation for the linear
  {P}oisson-{B}oltzmann equation.
\newblock {\em Appl. Math. Comput.}, 164(1):11--23, 2005.

\bibitem{jdt13}
J.~B. Collins, D.~Estep, and S.~Tavener.
\newblock A posteriori error analysis for finite element methods with
  projection operators as applied to explicit time integration techniques.
\newblock {\em BIT Numerical Mathematics}, 55(4):1017--1042, 2015.

\bibitem{CBH+2014}
Jeffrey~M Connors, Jeffrey~W Banks, Jeffrey~A Hittinger, and Carol~S Woodward.
\newblock Quantification of errors for operator-split advection--diffusion
  calculations.
\newblock {\em Computer Methods in Applied Mechanics and Engineering},
  272:181--197, 2014.

\bibitem{CoFr97}
Christina~M. Cortis and Richard~A. Friesner.
\newblock Numerical solution of the {P}oisson-{B}oltzmann equation using
  tetrahedral finite-element meshes.
\newblock {\em J. Comput.\ Chem.}, 18:1591--1608, 1997.

\bibitem{DaMc89}
M.~E. Davis and J.~A. McCammon.
\newblock Solving the finite difference linearized {P}oisson-{B}oltzmann
  equation: A comparison of relaxation and conjugate gradient methods.
\newblock {\em J. Comput.\ Chem.}, 10:386--391, 1989.

\bibitem{DaMc90}
Malcolm~E. Davis and J.~Andrew Mc{C}ammon.
\newblock Electrostatics in biomolecular structure and dynamics.
\newblock {\em Chem. Rev.}, 90(3):509--521, 1990.

\bibitem{Do96}
W.~D\"{o}rfler.
\newblock A convergent adaptive algorithm for {P}oisson's equation.
\newblock {\em SIAM Journal on Numerical Analysis}, 33(3):1106--1124, 1996.

\bibitem{eehj_actanum_95}
K.~Eriksson, D.~Estep, P.~Hansbo, and C.~Johnson.
\newblock Introduction to adaptive methods for differential equations.
\newblock In {\em Acta Numerica, 1995}, Acta Numerica, pages 105--158.
  Cambridge Univ. Press, Cambridge, 1995.

\bibitem{eehj_book_96}
K.~Eriksson, D.~Estep, P.~Hansbo, and C.~Johnson.
\newblock {\em Computational Differential Equations}.
\newblock Cambridge University Press, Cambridge, 1996.

\bibitem{estep_sinum_95}
D.~Estep.
\newblock \emph{A posteriori} error bounds and global error control for
  approximation of ordinary differential equations.
\newblock {\em SIAM J. Numer. Anal.}, 32(1):1--48, 1995.

\bibitem{Fish:2009}
D.~Estep.
\newblock Error estimates for multiscale operator decomposition for
  multiphysics models.
\newblock In J.~Fish, editor, {\em Multiscale methods: bridging the scales in
  science and engineering}. Oxford University Press, USA, 2009.

\bibitem{Estep:Larson:00}
D.~J. Estep, M.~G. Larson, R.~D. Williams, and American~Mathematical Society.
\newblock {\em Estimating the error of numerical solutions of systems of
  reaction-diffusion equations}.
\newblock American Mathematical Society, 2000.

\bibitem{EHL05}
Donald Estep, Michael Holst, and Mats Larson.
\newblock Generalized {G}reen's functions and the effective domain of
  influence.
\newblock {\em SIAM Journal on Scientific Computing}, 26(4):1314--1339, 2005.

\bibitem{ETW2010}
Donald Estep, Simon Tavener, and Tim Wildey.
\newblock A posteriori error estimation and adaptive mesh refinement for a
  multiscale operator decomposition approach to fluid--solid heat transfer.
\newblock {\em Journal of Computational Physics}, 229(11):4143--4158, 2010.

\bibitem{FBM02}
F.~Fogolari, A.~Brigo, and H.~Molinari.
\newblock The {P}oisson-{B}oltzmann equation for biomolecular electrostatics: A
  tool for structural biology.
\newblock {\em J. Mol. Recognit.}, 15(6):377--392, 2002.

\bibitem{GYW2007}
W.~Geng, S.~Yu, and G.~Wei.
\newblock Treatment of charge singularities in implicit solvent models.
\newblock {\em J.\ Chem. Phys.}, 127(11):114106, 2007.

\bibitem{Giles:Suli:02}
M.~B. Giles and E.~S{\"u}li.
\newblock Adjoint methods for {PDE}s: a posteriori error analysis and
  postprocessing by duality.
\newblock {\em Acta Numerica}, 11(1):145--236, 2002.

\bibitem{GSH87}
M.~K. Gilson, K.~A. Sharp, and B.~H. Honig.
\newblock Calculating the electrostatic potential of molecules in solution:
  Method and error assessment.
\newblock {\em J. Comput.\ Chem.}, 9:327--335, 1987.

\bibitem{Go1910}
G.~Gouy.
\newblock Sur la constitution de la charge \'electrique a la surface d'un
  \'electrolyte.
\newblock {\em J. Phys. Theor. Appl.}, 9:457--468, 1910.

\bibitem{Hols01}
M.~Holst.
\newblock Adaptive numerical treatment of elliptic systems on manifolds.
\newblock {\em Adv. Comput. Math.}, 15(1--4):139--191, 2001.

\bibitem{HMY+2012}
M.~Holst, J.A. McCammon, Z.~Yu, Y.C. Zhou, and Y.~Zhu.
\newblock Adaptive finite element modeling techniques for the
  {P}oisson-{B}oltzmann equation.
\newblock {\em Communications in Computational Physics}, 11(01):179--214, Jan
  2012.

\bibitem{HBW00}
M.~J. Holst, N.~A. Baker, and F.~Wang.
\newblock Adaptive multilevel finite element solution of the
  {P}oisson-{B}oltzmann equation {I}: Algorithms and examples.
\newblock {\em J. Comput.\ Chem.}, 21:1319--1342, 2000.

\bibitem{HoSa95}
M.~J. Holst and F.~Saied.
\newblock Numerical solution of the nonlinear {P}oisson-{B}oltzmann equation:
  Developing more robust and efficient methods.
\newblock {\em J. Comput.\ Chem.}, 16:337--364, 1995.

\bibitem{Kato13}
Tosio Kato.
\newblock {\em Perturbation Theory for Linear Operators}.
\newblock Springer, 2013.

\bibitem{Kirk34}
John~G. Kirkwood.
\newblock Theory of solutions of molecules containing widely separated charges
  with special application to zwitterions.
\newblock {\em J. Chem.\ Phys.}, 2(7):351--361, 1934.

\bibitem{Ko2006}
Patrice Koehl.
\newblock Electrostatics calculations: Latest methodological advances.
\newblock {\em Curr. Opin. Struc. Biol.}, 16(2):142--151, 2006.

\bibitem{LoggMardalEtAl2012a}
Anders Logg, Kent-Andre Mardal, Garth~N. Wells, et~al.
\newblock {\em Automated Solution of Differential Equations by the Finite
  Element Method}.
\newblock Springer, 2012.

\bibitem{LoggWells2010a}
Anders Logg and Garth~N. Wells.
\newblock {DOLFIN}: Automated finite element computing.
\newblock {\em ACM Transactions on Mathematical Software}, 37(2), 2010.

\bibitem{LoggWellsEtAl2012a}
Anders Logg, Garth~N. Wells, and Johan Hake.
\newblock {\em {DOLFIN}: a C++/Python Finite Element Library}, chapter~10.
\newblock Springer, 2012.

\bibitem{LZMJ05}
B.~Lu, D.~Zhang, and J.~A. McCammon.
\newblock Computation of electrostatic forces between solvated molecules
  determined by the {P}oisson-{B}oltzmann equation using a boundary element
  method.
\newblock {\em J.\ Chem. Phys.}, 122(21):214102, 2005.

\bibitem{LDM92}
B.~A. Luty, M.~E. Davis, and J.~A. McCammon.
\newblock Solving the finite-difference non-linear {P}oisson-{B}oltzmann
  equation.
\newblock {\em J. Comput.\ Chem.}, 13(9):1114--1118, 1992.

\bibitem{Marchuk1995}
Guri~I. Marchuk.
\newblock {\em Adjoint Equations and Analysis of Complex Systems}.
\newblock Springer Nature, 1995.

\bibitem{MAS1996}
Guri~I Marchuk, Valeri~I Agoshkov, and Victor~P Shutyaev.
\newblock {\em Adjoint equations and perturbation algorithms in nonlinear
  problems}.
\newblock CRC Press, 1996.

\bibitem{McQuarrie76}
Donald~A. McQuarrie.
\newblock {\em Statistical Mechanics}.
\newblock Harpercollins College Div, 1976.

\bibitem{NA2007}
Marian Nemec and Michael Aftosmis.
\newblock Adjoint error estimation and adaptive refinement for
  embedded-boundary {C}artesian meshes.
\newblock In {\em 18th AIAA Computational Fluid Dynamics Conference}, page
  4187, 2007.

\bibitem{Ortt77}
William~H. Orttung.
\newblock Direct solution of the {P}oisson equation for biomolecules of
  arbitrary shape, polarizability density, and charge distribution.
\newblock {\em Ann. N.Y. Acad. Sci.}, 303:22--37, 1977.

\bibitem{VRAS2015}
Vishwas Rao and Adrian Sandu.
\newblock A posteriori error estimates for the solution of variational inverse
  problems.
\newblock {\em SIAM/ASA Journal on Uncertainty Quantification}, 3(1):737--761,
  2015.

\bibitem{Ra90}
A.~A. Rashin.
\newblock Hydration phenomena, classical electrostatics, and the boundary
  element method.
\newblock {\em J.\ Chem. Phys.}, 94(5):1725--1733, 1990.

\bibitem{SMN02}
A.~I. Shestakov, J.~L. Milovich, and A.~Noy.
\newblock Solution of the nonlinear {P}oisson-{B}oltzmann equation using
  pseudo-transient continuation and the finite element method.
\newblock {\em J. Colloid Interf. Sci.}, 247(1):62--79, 2002.

\bibitem{Tanford61}
Charles Tanford.
\newblock {\em Physical chemistry of macromolecules}.
\newblock New York, Wiley, 1961.

\bibitem{ViMa2005}
Christina~L. Vizcarra and Stephen~L. Mayo.
\newblock Electrostatics in computational protein design.
\newblock {\em Curr. Opin. Chem. Biol.}, 9(6):622--626, 2005.

\bibitem{VGS92}
Y.~N. Vorobjev, J.~A. Grant, and H.~A. Scheraga.
\newblock A combined iterative and boundary-element approach for solution of
  the nonlinear {P}oisson-{B}oltzmann equation.
\newblock {\em J. Amer. Chem. Soc.}, 114(9):3189--3196, April 1992.

\bibitem{VoSc97}
Yury~N. Vorobjev and Harold~A. Scheraga.
\newblock A fast adaptive multigrid boundary element method for macromolecular
  electrostatic computations in a solvent.
\newblock {\em J. Comput.\ Chem.}, 18(4):569--583, 1997.

\bibitem{YoLe90}
B.~J. Yoon and A.~M. Lenhoff.
\newblock A boundary element method for molecular electrostatics with
  electrolyte effects.
\newblock {\em J.\ Comput. Chem.}, 11(9):1080--1086, 1990.

\bibitem{Yosida08}
Kôsaku Yosida.
\newblock {\em Functional Analysis}.
\newblock Springer, 2008.

\bibitem{YHCM08}
Z.~Yu, M.~J. Holst, Y.~Cheng, and J.~A. Mc{C}ammon.
\newblock Feature-preserving adaptive mesh generation for molecular shape
  modeling and simulation.
\newblock {\em J. Mol. Graph. Model.}, 26(8):1370--1380, 2008.

\bibitem{Zho93}
H.-X. Zhou.
\newblock Boundary element solution of macromolecular electrostatics:
  Interaction energy between two proteins.
\newblock {\em Biophys.\ J.}, 65:955--963, 1993.

\bibitem{ZPVK96}
Zhongxiang Zhou, Philip Payne, Max Vasquez, Nat Kuhn, and Michael Levitt.
\newblock Finite-difference solution of the {P}oisson-{B}oltzmann equation:
  Complete elimination of self-energy.
\newblock {\em J. Comput.\ Chem.}, 17(11):1344--1351, 1996.

\end{thebibliography}

\end{document}